\documentclass[11pt, reqno]{amsart}
\allowdisplaybreaks[1]

\usepackage{relsize}
\usepackage{amsmath,amsfonts,amssymb,amsthm,mathrsfs}
\usepackage{hyperref}
\usepackage{cleveref}
\usepackage[margin=2.5cm]{geometry}
\usepackage{setspace}
\usepackage{mathtools}

\usepackage{enumitem}

\usepackage{setspace}
\usepackage{xcolor}
\keywords{integration theory, quantum instruments, completely positive maps, non-atomic instruments, Lyapunov theorem, Krein-Milman theorem, $C^*$-convexity}
\subjclass[2020]{46B22,46L51, 46G10,81P15.}

\allowdisplaybreaks[1]

\newcommand{\bh}{\mathcal{B}(\mathcal{H})}
\newcommand{\bk}{\mathcal{B}(\mathcal{K})}

\newcommand{\cbxa}{C_b(X,\mathcal{A})}
\newcommand{\ox}{\mathcal{O}(X)}

\newcommand{\be}{\begin{equation}}
	\newcommand{\ee}{\end{equation}}
\newcommand{\bea}{\begin{eqnarray}}
	\newcommand{\eea}{\end{eqnarray}}
\newcommand{\bean}{\begin{eqnarray*}}
	\newcommand{\eean}{\end{eqnarray*}}
\newcommand{\brray}{\begin{array}}
	\newcommand{\erray}{\end{array}}

\newtheorem{dfn}{Definition}[section]
\newtheorem{thm}[dfn]{Theorem}
\newtheorem{lmma}[dfn]{Lemma}
\newtheorem{ppsn}[dfn]{Proposition}
\newtheorem{crlre}[dfn]{Corollary}
\newtheorem{xmpl}[dfn]{Example}
\newtheorem{rmrk}[dfn]{Remark}

\theoremstyle{definition}\newtheorem*{notation}{Notation}
\theoremstyle{definition}

\newcommand{\cst}{C^*}

\newcommand{\bdfn}{\begin{dfn}\rm}
	\newcommand{\bthm}{\begin{thm}}
		\newcommand{\blmma}{\begin{lmma}}
			\newcommand{\bppsn}{\begin{ppsn}}
				\newcommand{\bcrlre}{\begin{crlre}}
					\newcommand{\bxmpl}{\begin{xmpl}}
						\newcommand{\brmrk}{\begin{rmrk}\rm}
							
							\newcommand{\edfn}{\end{dfn}}
						\newcommand{\ethm}{\end{thm}}
					\newcommand{\elmma}{\end{lmma}}
				\newcommand{\eppsn}{\end{ppsn}}
			\newcommand{\ecrlre}{\end{crlre}}
		\newcommand{\exmpl}{\end{xmpl}}
	\newcommand{\ermrk}{\end{rmrk}}

\newcommand{\bbc}{\mathbb{C}}

\newcommand{\bbn}{\mathbb{N}}

\newcommand{\cla}{\mathcal{A}}
\newcommand{\clb}{\mathcal{B}}

\newcommand{\clh}{\mathcal{H}}
\newcommand{\cli}{\mathcal{I}}
\newcommand{\clj}{\mathcal{J}}
\newcommand{\clk}{\mathcal{K}}

\newcommand{\cls}{\mathcal{S}}

\newcommand{\clo}{\mathcal{O}}

\makeatletter
\let\@wraptoccontribs\wraptoccontribs
\makeatother

\title[Integration theory]{Integration Theory for Completely positive Instruments: A Lyapunov-Type Theorem and Applications}

\author{Arghya Chongdar}
\address{Indian Statistical Institute, Stat Math Unit, R V College Post, Bengaluru, 560059, India.}
\email{chongdararghya@gmail.com}

\date{}
\begin{document}
	\begin{abstract}

We develop a theory of integration with respect to quantum instruments through two complementary approaches: a vector measure formulation based on Bartle's integration theory and a tensor product construction. As a principal application, we establish a CP map-valued Lyapunov theorem by characterizing the convexity of the range of non-atomic completely positive instruments via the associated integration map. This extends the work of Plosker and Ramsey~\cite{Plosker_Ramsey} from POVMs to completely positive instruments and gives a general characterization of the phenomenon exhibited in their setting. We also establish a correspondence between completely positive instruments and completely positive maps, prove a Krein--Milman type theorem for the $C^*$-convex set of unital completely positive instruments, and show that the tensor product construction connects the theory of CP instruments with the integration theory for POVMs due to Farenick \emph{et al}(\cite{douglus_plosker_ramsey_povmintegration_1}).

\end{abstract}

\maketitle
\section{Introduction}

Integration with respect to vector measures occupies a central position in modern analysis, providing a unified framework that connects functional analysis, operator theory, Banach space geometry, and convexity theory. The classical theories of Bochner, Pettis, and Dunford integration were unified by Bartle's integration theory for vector measures \cite{bartle_integral}, which has since become a fundamental tool in the study of vector-valued functions (see \cite{uhl_vectormeasures} for a comprehensive account). Beyond its intrinsic analytic interest, vector integration has led to several remarkable applications, among which Lyapunov's convexity theorem \cite{liapounoff1940fonctions} occupies a distinguished place. One of the striking features of Lyapunov's theorem is that the convexity of the range of a non-atomic vector measure is intimately connected with the associated integration operator, revealing a deep interplay between integration and the geometry of vector measures.

A natural question is whether an analogous integration theory exists in the noncommutative setting. Completely positive (CP) instruments, introduced by Davies and Lewis \cite{davies} and subsequently refined by Ozawa \cite{ozawa}, provide the mathematical framework for describing both the statistics of quantum measurements and the corresponding post-measurement state transformations. Consequently, CP instruments have become fundamental objects in quantum information theory, operator algebras, and quantum probability. Their convex structure has been extensively investigated from the viewpoint of classical convexity
\cite{holevo,pellonapaa1,pellonapaa2,Srinivas}, while a genuinely noncommutative $C^*$-convex framework has recently been developed in \cite{bhat_chongdar_sruthymurali}. Nevertheless, despite the central role played by integration in the classical theory of vector measures, no comparable integration theory is presently available for quantum instruments.

The purpose of this paper is to develop an integration theory for completely positive instruments that plays the same foundational role as Bartle's integration theory for vector measures. We present two complementary constructions. The first extends Bartle's vector integration to the setting of completely positive instruments by introducing suitable notions of measurability and integrability for $\mathcal A$-valued functions. The second is based on tensor products and provides an intrinsic operator-algebraic formulation of integration, naturally accommodating a broader class of measurable functions. Together, these two approaches establish a unified framework that reveals new connections between quantum instruments, completely positive maps, and operator-valued measures.

A principal application of this framework is an operator-valued analogue of Lyapunov's convexity theorem. We characterize the convexity of the range of a non-atomic completely positive instrument in terms of the non-injectivity of its associated integration map, thereby extending the operator-valued Lyapunov theorem of Plosker and Ramsey \cite{Plosker_Ramsey} from positive operator-valued measures to completely positive instruments. In particular, our characterization resolves a question raised in Example~3.4 of \cite{Plosker_Ramsey}. Beyond the Lyapunov theorem, the integration theory developed here leads naturally to two further structural results. First, we establish a correspondence between regular completely positive instruments and completely positive maps, extending both the classical Riesz--Markov representation theorem and the correspondence between POVMs and completely positive maps. Secondly, we prove a Krein--Milman type theorem for the $C^*$-convex set of unital completely positive instruments, thereby providing a structural description of this class in terms of its $C^*$-extreme points.

The paper is organized as follows.

In Section~\ref{basic terminologies}, we introduce completely positive instruments on measurable spaces, fix the notation used throughout the paper, and recall the basic examples and structural results, including the bi-dilation theorem established in \cite{bhat_chongdar_sruthymurali}.

The main body of the paper begins with Section~\ref{Classical approach}, where we develop integration with respect to instruments from the perspective of classical vector measure theory. We first establish the connection between instruments and $\mathcal A^*$-valued vector measures, showing that every instrument naturally gives rise to a family of vector measures of bounded variation (Remark~\ref{reformulation of definition of instruments}). We then introduce a total variation metric on the space of instruments and prove its completeness (Theorem~\ref{completeness w.r.t TV metric}). Next, we develop an integration theory for $\mathcal A$-valued functions by introducing weak and strong measurability and proving a Pettis-type theorem (Theorem~\ref{petti-type theorem}), which allows us to define integration for bounded separably valued measurable functions.

The remainder of Section~\ref{Classical approach} is devoted to three applications of the integration theory. In Subsection~\ref{Application 1: Lyapunov-type theorem}, we establish a Lyapunov-type theorem for non-atomic instruments by characterizing the non-injectivity of the associated integration map (Theorem~\ref{rangecharacterization}), and we illustrate the necessity of this condition through an infinite-dimensional example (Example~\ref{injective example}). Subsection~\ref{Application 2: Correspondence between instruments and CP maps} establishes a correspondence between regular completely positive instruments and completely positive maps, extending both the classical Riesz--Markov representation theorem and the CP--POVM correspondence. In Subsection~\ref{Application 3: Krein-Milman type theorem}, we introduce the bounded-weak topology on the space of instruments (Definition~\ref{definition of bw topology on instruments}) and prove a Krein--Milman type theorem (Theorem~\ref{Krein-Milman type theorem for instruments}), showing that every unital completely positive instrument is a BW-limit of $C^*$-convex combinations of $C^*$-extreme instruments.

The final section presents an alternative integration theory based on tensor-product techniques. This approach is intrinsically operator-algebraic and naturally extends the class of integrable functions beyond those obtained through Bartle-type approximation. We conclude with Example~\ref{non_trivial_instrument}, which demonstrates the advantage of the tensor-product construction and shows that it naturally connects the integration theory for POVMs developed by Farenick \emph{et al.} with instruments

  For background on completely positive maps and positive operator-valued measures, we refer the reader to \cite{arvesonsubalgebra,pisiernormal,daviesquantum,paulsen_book} and the references therein. Throughout the paper, we use the abbreviations UCP for \emph{unital completely positive} and POVM for \emph{positive operator-valued measure}. Unless stated otherwise, all Hilbert spaces are assumed to be complex, with inner products anti-linear in the first variable. For a Hilbert space $\clh$, we denote by $\bh$ the $C^*$-algebra of all bounded linear operators on $\clh$, and by $I_{\clh}$ the identity operator on $\clh$.
 
\section{Basic terminologies}\label{basic terminologies}
We begin by recalling the notion of completely positive (CP) instruments introduced in \cite{bhat_chongdar_sruthymurali}, together with the basic notation and structural results that will be used throughout the paper.

\begin{dfn}[Quantum instruments]\label{maindefinition}\index{quantum instruments}
	Let $(X,\ox)$ be a measurable space, $\mathcal{A}$ be a unital $C^*$-algebra and $\mathcal{H}$ be a Hilbert space. A  \emph{CP instrument} on $(X,\clo(X))$ with values in $CP(\cla,\bh)$ is a map $\cli: \ox\to CP(\cla,\bh)$ satisfies:  
	\begin{enumerate}
		\item $\cli(A)\in CP(\cla,\bh)~,$ for all~$A\in~\mathcal{O}(X);$ 
		\item  for every $h,k\in\mathcal{H}$ and any $a\in\mathcal{A}$, the map $\cli_{a,h,k}:\mathcal{O}(X)\to\mathbb{C}$ defined  by
		\begin{equation}
			\cli_{a,h,k}(A)=\langle h,\cli(A)(a)k\rangle~~\text{for all } A\in\ox,\label{eq:notation for cli_a,h,k}
		\end{equation}
		is a  complex measure.
	\end{enumerate}
	
	Moreover, a quantum instrument $\cli$ is said to be 
	\begin{itemize}
		\item \emph{normalized} or \emph{unital}\index{normalized instrumnet} if $\cli(X)(1_\cla)=1_\mathcal{H}$ i.e. if the completely positive map $\cli(X)$ is \emph{unital}.
		\item \emph{normal}\index{normal! instrument} if $\cla$ is a von Neumann algebra and $\cli(A)$'s are normal completely positive map for all $A\in\ox.$
		\item \emph{spectral}\index{spectral instrument} if $\cli(A)$ is a $*$-homomorphism for all $A\in \ox$ and is unital.
	\end{itemize}
\end{dfn}

One can easily check that POVMs and CP maps arise naturally as special cases of CP instruments.

\begin{rmrk}\label{complex measures associated to an instrument}
	For any CP instrument $\cli$, we denote by $\cli_{a,h,k}$ the associated complex measure defined in \eqref{eq:notation for cli_a,h,k}.
	It is straightforward to verify that a CP instrument $\cli$ is completely determined by the family of complex measures $\{ \mathcal{I}_{a,h,k} : a \in \mathcal{A},\ h,k \in \mathcal{H} \}.$
	
\end{rmrk}
\begin{thm}(Bi-dilation Theorem)\label{Bi-dilation Theorem}(Theorem 2.7,\cite{bhat_chongdar_sruthymurali})
    Let $\cli: \ox \to CP(\cla,\bh)$ be an instrument. Then there exists a quadruple $(\clk,\pi,E,V),$ where $\clk$ is a Hilbert space, $\pi: \cla \to \bk$ is a unital $*$-homomorphism, $E: \ox \to \bk$ is a spectral measure, and $V\in\clb(\clh,\clk)$ such that, \begin{equation}\label{naimarkdilation}
			\cli(A)(a)=V^*\pi(a)E(A)V\quad\text{and}\quad E(A)\pi(a)=\pi(a)E(A),
			\forall a \in \cla, A \in \ox
		\end{equation}
		and satisfies the minimality condition: $[\pi(\cla)E(\ox)V\clh]=\clk.$ Such a quadruple is unique up to unitary equivalence and is called a minimal bi-dilation tuple.
\end{thm}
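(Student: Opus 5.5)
The plan is a GNS--Stinespring construction on the algebraic tensor product $\cla\odot \mathcal{S}(X)\odot\clh$, where $\mathcal{S}(X)$ denotes the $*$-algebra of simple measurable functions on $X$ (equivalently, the span of indicator functions $\chi_A$, $A\in\ox$). On elementary tensors I define the sesquilinear form
\[
\Big\langle \sum_i a_i\otimes\chi_{A_i}\otimes h_i,\ \sum_j b_j\otimes\chi_{B_j}\otimes k_j\Big\rangle
:=\sum_{i,j}\langle h_i,\cli(A_i\cap B_j)(a_i^*b_j)k_j\rangle .
\]
Well-definedness with respect to the choice of representation of simple functions follows from finite additivity of $A\mapsto\cli(A)$, which holds because each $\cli_{a,h,k}$ is a complex measure (Remark~\ref{complex measures associated to an instrument}).

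The first and main step is positivity. Given a finite family $\{a_i\otimes\chi_{A_i}\otimes h_i\}$, refine the sets $A_i$ to a common finite measurable partition $\{C_1,\dots,C_m\}$ of $\bigcup A_i$. The quadratic form then splits as
\[
\sum_{l=1}^m\ \sum_{i,j:\,C_l\subseteq A_i\cap A_j}\langle h_i,\cli(C_l)(a_i^*a_j)h_j\rangle .
\]
Each inner sum is nonnegative by complete positivity of $\cli(C_l)$, since the matrix $[a_i^*a_j]$ is positive in $M_n(\cla)$. Quotienting by null vectors and completing gives a Hilbert space $\clk$.

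Next I would define the operators:
\[
\pi(a)(b\otimes\chi_B\otimes k)=ab\otimes\chi_B\otimes k,\qquad
E(A)(b\otimes\chi_B\otimes k)=b\otimes\chi_{A\cap B}\otimes k,\qquad
Vh=1_\cla\otimes\chi_X\otimes h .
\]
To see that $\pi(a)$ is bounded, apply the same partition argument together with the inequality $a^*a\le\|a\|^2 1$ in each block, exactly as in the Stinespring proof. Boundedness of $E(A)$ reduces to checking $\langle E(A)\xi,E(A)\xi\rangle\le\langle\xi,\xi\rangle$; this holds because the blockwise decomposition of the form simply drops the cells outside $A$, and each dropped block is nonnegative. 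Once bounded, each $E(A)$ is a self-adjoint idempotent, $E$ is finitely additive, and $E(A)$ commutes with $\pi(a)$ by construction.

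For countable additivity of $E$ in the strong operator topology, it suffices, since $E$ is a uniformly bounded family of projections, to check weak convergence on the dense span. The matrix coefficients there are finite sums of the complex measures $\cli_{c,h,k}$, which are countably additive. The relation $V^*\pi(a)E(A)V=\cli(A)(a)$ then follows from the defining form, and $V$ is bounded with $\|V\|^2=\|\cli(X)(1)\|$. Unitality of $\pi$ is immediate, and minimality holds because the vectors $\pi(b)E(B)Vk=b\otimes\chi_B\otimes k$ span a dense subspace by construction.

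For uniqueness, given two minimal quadruples, define $U\pi_1(a)E_1(A)Vh=\pi_2(a)E_2(A)V_2h$. Using commutativity, the inner products of such vectors are $\langle h,\cli(A\cap B)(a^*b)k\rangle$ in both dilations, so $U$ is isometric, and by minimality it extends to a unitary intertwining the two quadruples. I expect the only real obstacle to be the positivity and boundedness bookkeeping via common partitions; everything else is routine.
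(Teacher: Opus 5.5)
Your Kolmogorov/Stinespring-type construction on $\cla\odot\mathcal{S}(X)\odot\clh$ is correct at every step: positivity via a common refining partition, boundedness of $\pi(a)$ and $E(A)$, strong countable additivity inherited from the complex measures $\cli_{c,h,k}$, and uniqueness via the intertwining isometry. The paper does not reprove this theorem; it imports it from Theorem~2.7 of \cite{bhat_chongdar_sruthymurali}, and the positivity inequality $\sum_{i,j}\langle h_i,\cli(A_i\cap A_j)(a_i^*a_j)h_j\rangle\ge 0$ that it later quotes from that proof (in the proof of Theorem~\ref{CP and instrument correspondence}) is exactly your key step, so your route appears to be essentially the cited one.
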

\begin{notation}
We write $I_{\clh}(X,\cla)$ for the collection of all unital completely positive instruments and $Ins_{\clh}(X,\cla)$ for the collection of all completely positive instruments on $(X,\ox)$ with values in $CP(\cla,\bh)$.
\end{notation}
\section{Classical approach}\label{Classical approach}
To formally introduce the notion of integration with respect to an instrument, in this approach, we draw upon the theory of the Bartle integral (see \cite{bartle_integral}). In order to establish the necessary theoretical foundation, we first recall the definition of vector-valued measures taking values in a Banach space and review some of their fundamental properties. For a comprehensive exposition on vector measure theory, see \cite{ryan_tensorproduct_book}.

\color{black}
\begin{dfn}[Vector valued measures]\index{Vector valued measures}
	A vector measure on a measurable space $(X,\clo(X))$ is a countably additive function $\mu$ on $\clo(X)$ with values in a Banach space $\clb$. Thus if $\{E_n\}$ is a sequence of mutually disjoint measurable subsets of $X$ with $\cup_nE_n=E$, then the series $\sum_n \mu(E_n)$ converges to $\mu(E)$ unconditionally.
\end{dfn}
\begin{rmrk} In finite-dimensional Banach spaces, absolute convergence and unconditional convergence are equivalent. However, in the infinite-dimensional setting, unconditional convergence does not necessarily imply absolute convergence.
\end{rmrk}
Analogous to the case of complex measures, the notions of semi-variation and total variation are equally applicable in this context.

\begin{dfn}[Semi-variation and Total variation]
	Let $\mu$ be a vector measure on $(X,\clo(X))$ with values in a Banach space $\clb$. The semi-variation of $\mu$ is the extended non-negative function $\|\mu\|$ whose value on a set $A\in\ox,$ denoted by $\|\mu\|(A)$ or $\|A\|,$ is defined to be 
	\begin{align*}
		\|A\|=\sup \{ |\sum_i f(\mu(E_i))|: \{E_1,\cdots,E_n\} \text{ is a partition of $A$ and} ~f \in \clb^* ~\text{with}~ \|f\|\leq1\}
	\end{align*}
	and the total variation of $\mu,$ the extended non-negative function $|\mu|,$ is defined to be
	\begin{align*}
		|\mu|(A)=\sup \{\sum_i\lVert\mu(E_i)\rVert: \{E_1,\cdots,E_n\} \text{ is a partition of $A$}\}.
	\end{align*}
\end{dfn}
\begin{rmrk}
	Unlike complex measures, the total variation $|\mu|$ of a vector measure $\mu$ is not necessarily finite.
\end{rmrk} 
\begin{dfn}[Bounded variation]
	$\mu$ is said to be of bounded variation if $|\mu|(X)<\infty$, i.e $|\mu| $ is a finite positive measure on $(X,\clo(X))$.
\end{dfn}

As noted in Remark \ref{complex measures associated to an instrument}, the complex measures associated with an instrument can completely determine the instrument. However, it is also possible to associate a broader class of vector measures to an instrument, extending beyond the scope of complex measures. These vector measures offer a more general framework for understanding and analyzing the structure and behavior of instruments.

\begin{ppsn}\label{measurecorrespondence}
	Let $\cli:\ox\to CP(\cla,\bh)$ be a CP instrument and let $\rho$ be a density operator on the separable Hilbert space $\mathcal{H}.$ Then the map $\cli_\rho:\ox\to\cla^*$ defines a $\cla^*$-valued vector measure on $(X,\clo(X))$ which are of bounded variation, given by
	$\cli_\rho(A)(a)=Tr(\cli(A,a)\rho),$ where $Tr$ is the usual trace functional on $\bh.$ 
\end{ppsn}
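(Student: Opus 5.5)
The plan is to use the Bi-dilation Theorem (Theorem~\ref{Bi-dilation Theorem}) to turn every question about $\cli_\rho$ into a question about one positive $\bk$-valued object. Fix a minimal bi-dilation $(\clk,\pi,E,V)$, so that $\cli(A)(a)=V^*\pi(a)E(A)V$. Write the density operator as $\rho=\sum_n \lambda_n |e_n\rangle\langle e_n|$ with $\lambda_n\ge 0$ and $\sum_n\lambda_n=1$. Then
\[
\cli_\rho(A)(a)=Tr(V^*\pi(a)E(A)V\rho)=\sum_n\lambda_n\langle Ve_n,\pi(a)E(A)Ve_n\rangle .
\]
The proof has three steps, in this order.

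\textbf{Well-definedness.} For fixed $A$, the map $a\mapsto Tr(\cli(A)(a)\rho)$ is linear. It is bounded because $\|\cli(A)(a)\|\le\|\cli(A)\|\,\|a\|$ and $|Tr(T\rho)|\le\|T\|$. Hence $\cli_\rho(A)\in\cla^*$. Moreover $\cli_\rho(A)$ is a positive functional, since $\cli(A)$ is completely positive and $\rho\ge0$. Consequently $\|\cli_\rho(A)\|=\cli_\rho(A)(1_\cla)=Tr(\cli(A)(1)\rho)$.

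\textbf{Bounded variation.} Let $\{E_1,\dots,E_n\}$ be a partition of $A$. Using positivity and finite additivity of $A\mapsto\cli(A)(1)$, we get
\[
\sum_i\|\cli_\rho(E_i)\|=\sum_i Tr(\cli(E_i)(1)\rho)=Tr(\cli(A)(1)\rho)\le\|\cli(X)(1)\|<\infty .
\]
Finite additivity of $A\mapsto\cli(A)(a)$ in the weak sense follows from the complex measures $\cli_{a,h,k}$. Hence $|\cli_\rho|(A)=Tr(\cli(A)(1)\rho)$, which is finite.

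\textbf{Countable additivity in norm.} This is the main obstacle, because only weak$^*$ (pointwise) additivity is immediate. Pointwise, for disjoint $\{A_m\}$ with union $A$, one has $\cli_\rho(A)(a)=\sum_m\cli_\rho(A_m)(a)$. This follows by applying countable additivity of each $\cli_{a,e_n,e_n}$ and using dominated convergence in $n$, since $|\cli_{a,e_n,e_n}|(\cdot)\le\|a\|\,\|\cli(X)(1)\|$.

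To upgrade to norm convergence, I would use positivity again. Write $B_N=A\setminus\bigcup_{m\le N}A_m$. Then
\[
\Big\|\cli_\rho(A)-\sum_{m\le N}\cli_\rho(A_m)\Big\|=\|\cli_\rho(B_N)\|=Tr(\cli(B_N)(1)\rho).
\]
The right-hand side tends to $0$ by the scalar countable additivity just established, applied with $a=1$. The same identity shows $\sum_m\|\cli_\rho(A_m)\|<\infty$, so the series converges absolutely, and hence unconditionally. This makes $\cli_\rho$ an $\cla^*$-valued vector measure of bounded variation.

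The dilation could alternatively be used to express $\cli_\rho(A)(a)$ as $Tr(\pi(a)E(A)\tilde\rho)$ with $\tilde\rho=V\rho V^*$. However, the positivity argument above already suffices, so the dilation is only a convenience.
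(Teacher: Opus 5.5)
Your argument is correct and follows essentially the same route as the paper. Positivity of $\cli_\rho(A)$ gives $\|\cli_\rho(A)\|=\cli_\rho(A)(1_\cla)$, and once pointwise countable additivity is known, this yields both bounded variation and absolute (hence unconditional) norm summability. The differences are cosmetic: you justify the pointwise step by diagonalizing $\rho$ and using dominated convergence, where the paper appeals to the agreement of WOT and the $\sigma$-weak topology on bounded sets, and the bi-dilation you invoke at the start is never actually used, as you yourself note.
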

\begin{proof}
	Observe that, the Equation \eqref{eq:notation for cli_a,h,k} implies that for any $a\in\cla$ and countable collection of disjoint measurable sets $\{A_i\}_{i=1}^\infty$ in $\ox$ the series $\sum_{i=1}^\infty \cli(A_i,a)$ converges in WOT. Since on the bounded subsets of $\bh,$ WOT and $\sigma$-weak topology both agree, therefore, for any state $\rho\in\bh,$
	\begin{equation}\label{sigma weak convergence of instruments}
		Tr(\sum_{i=1}^\infty\cli(A_i,a)\rho)=\sum_{i=1}^\infty Tr(\cli(A_i,a)\rho).
	\end{equation}
	It's clear from the definition of $\cli_\rho$ that for every $A\in\ox,~\cli_\rho$ defines a positive linear functional on the $C^*$-algebra $\cla.$
	Let us prove the countable additivity of $\cli_\rho$. Let $\{A_i\}_{i=1}^\infty$ be a countable family of disjoint measurable subsets of $X$. We have to prove that,
	\begin{eqnarray}\label{countableadditivity}
		\cli _\rho(\cup_{i=1}^\infty A_i)=\sum_{i=1}^\infty \cli_\rho(A_i),
	\end{eqnarray}
	where the right-hand side of the Equation \ref{countableadditivity} converges unconditionally in the norm. Observe that,
	\begin{eqnarray}\nonumber
		\sum_ {i=1}^\infty \lVert \cli_\rho(A_i)\rVert &=&	\sum_ {i=1}^\infty\cli _\rho(A_i)(1_\cla)\quad (\text{since, $\cli_\rho$ is a positive linear functional on $\cla$})\\\nonumber
		&=& \sum_ {i=1}^\infty Tr(\cli(A_i,1_\cla)\rho)\\\nonumber
		&=&Tr(\cli(A,1_\cla)\rho)\quad(\text{follows from the Equation \ref{sigma weak convergence of instruments}})\\\nonumber
		&=& \cli _\rho(A)(1_\cla)\\\label{cauchycriterion}
		&<&\infty ,
	\end{eqnarray} 
	proving that the series in Equation \ref{countableadditivity} is a Cauchy series. Also for each $a \in \cla$ we have,
	\begin{eqnarray}\nonumber
		\cli_\rho(\cup_{i=1} ^\infty A_i)(a)
		&=& Tr(\cli(\cup_{i=1} ^\infty A_i,a)\rho)\\\nonumber
		&=&\sum_{i=1 }^\infty Tr(\cli(A_i,a)\rho)\\\
		&=& \sum_{i=1 }^\infty\cli _\rho(A_i)(a)\label{pointwiseconvergence}
	\end{eqnarray}
	establishing the point-wise convergence of the series in Equation \ref{countableadditivity}. Combining Equation \ref{cauchycriterion} and Equation \ref{pointwiseconvergence}, we can conclude that the series in Equation \ref{countableadditivity} converges unconditionally in the norm. It remains to prove that $\cli_\rho$ is of bounded variation. We have,
	\begin{eqnarray}\nonumber
		|\cli _\rho|(X)&= &\sup \{\sum_i\lVert\cli_\rho(E_i)\rVert: \{E_1,\cdots,E_n\} \text{ is a partition of X}\}\\\nonumber
		&=& \sup\{\sum_i\cli_\rho(E_i)(1_\cla):\{E_1,\cdots,E_n\} \text{ is a partition of X}\} \\\nonumber
		&=& \sup \{\sum_ i Tr(\cli(E_i,1_\cla)\rho): \{E_1,\cdots,E_n\} \text{ is a partition of X}\}\\\nonumber
		&=& Tr(\cli(X,1_\cla)\rho)\\\nonumber
		&=& \cli_\rho(X)(1_\cla)\\\nonumber
		&<& \infty,
	\end{eqnarray}
	finishing the proof. 
\end{proof}
\begin{rmrk}\label{finite measure realization for faithful states}
Whenever the Hilbert space $\clh$ is separable, faithful density operators on $\clh$ always exist. If $\sigma\in\bh$ is any faithful density operator, then the associated scalar measure $|\cli_\sigma|$ is a finite measure on $(X,\ox)$. Moreover, $\cli$ and $|\cli_\sigma|$ are mutually absolutely continuous; that is,
\[
\cli\ll |\cli_\sigma|
\quad\text{and}\quad
|\cli_\sigma|\ll\cli.
\]
In particular, if $\sigma_1$ and $\sigma_2$ are two faithful density operators on $\clh$, then the corresponding finite measures $|\cli_{\sigma_1}|$ and $|\cli_{\sigma_2}|$ are mutually absolutely continuous.
\end{rmrk}

\begin{rmrk}\label{vector measures associated to instruments}
More generally, for every trace class operator $\tau\in\bh$, define
\[
\cli_\tau(A)(a):=\operatorname{Tr}(\cli(A,a)\tau),
\qquad A\in\clo(X), a\in\cla.
\]
Since every trace class operator is a linear combination of density operators, $\cli_\tau$ is a $\cla^*$-valued measure of bounded variation.

In particular, for $h,k\in\clh$, taking $\tau=|k\rangle\langle h|$ yields the $\cla^*$-valued measures
\[
\cli_{h,k}(A)(a)=\langle h,\cli(A,a)k\rangle,
\]
which completely determine the instrument $\cli$, analogous to the complex measure construction in Remark~\ref{complex measures associated to an instrument}. Moreover,
\[
|\cli_{h,k}|(X)\le \|h\|\|k\|\|\cli(X,1_\cla)\|,
\]
so each $\cli_{h,k}$ is of bounded variation.
\end{rmrk}

\begin{rmrk}\label{reformulation of definition of instruments}
To conclude that $\cli_{h,k}$ is a $\cla^*$-valued measure for every
$h,k\in\clh$, the separability of $\clh$ is not required. Indeed, using
the same argument as in the proof of Theorem~\ref{measurecorrespondence},
one first establishes that $\cli_{h,h}$ is a $\cla^*$-valued measure for
every $h\in\clh$. The general case then follows from the polarization
identity
\[
\cli_{h,k}
=
\frac14\sum_{l=0}^{3}
i^{-l}\,
\cli_{\,h+i^{\,l}k,\,h+i^{\,l}k},
\]
where $i=\sqrt{-1}$. Consequently, the definition of a CP instrument may equivalently be formulated by replacing condition~(2), stated in terms of the complex measures $\cli_{a,h,k}$, with the requirement that the maps $\cli_{h,k}$ are $\cla^*$-valued measures.
\end{rmrk}

We next introduce a norm on the space of quantum instruments, analogous to the classical total variation norm for vector measures. The induced metric will be referred to as the \emph{total variation metric}.

\begin{dfn}[Total variation norm]
Let $\cli:\ox\to CP(\cla,\bh)$ be an instrument. For each $h,k\in\clh$, let $\cli_{h,k}$ denote the associated $\cla^*$-valued measure and let $|\cli_{h,k}|(X)$ denote its total variation over $X$. The \emph{total variation norm} of $\cli$ is defined by
\[
\|\cli\|_{\mathrm{TV}}
:=
\sup_{\|h\|=\|k\|=1}
|\cli_{h,k}|(X).
\]
The metric induced by $\|\cdot\|_{\mathrm{TV}}$ on the space of instruments will be called the \emph{total variation metric}.
\end{dfn}

\begin{thm}\label{completeness w.r.t TV metric}
    The set $Ins_{\clh}(X,\cla)$ of all instruments is complete with respect to the total variation metric.
\end{thm}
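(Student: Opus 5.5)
Given a Cauchy sequence $(\cli_n)$ in $Ins_{\clh}(X,\cla)$ for $\|\cdot\|_{\mathrm{TV}}$, I will construct a pointwise limit, show it is an instrument, and show convergence in the TV metric. The norm is applied to differences $\cli_n-\cli_m$, which are Hermitian but not CP instruments. So $\|\cli_n-\cli_m\|_{\mathrm{TV}}$ means $\sup_{\|h\|=\|k\|=1}|(\cli_n-\cli_m)_{h,k}|(X)$, computed for the $\cla^*$-valued measures $(\cli_n)_{h,k}-(\cli_m)_{h,k}$.

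\emph{Step 1: uniform convergence on sets.} Fix $A\in\ox$. For unit vectors $h,k$ and $\|a\|\le 1$, using the trivial partition $\{A,X\setminus A\}$,
\[
|\langle h,(\cli_n(A)-\cli_m(A))(a)k\rangle|\le \|(\cli_n)_{h,k}(A)-(\cli_m)_{h,k}(A)\|_{\cla^*}\le \|\cli_n-\cli_m\|_{\mathrm{TV}}.
\]
Hence $\|\cli_n(A)-\cli_m(A)\|\le\|\cli_n-\cli_m\|_{\mathrm{TV}}$ in the operator norm of bounded maps $\cla\to\bh$, and this estimate is uniform in $A$. Since $\mathcal{B}(\cla,\bh)$ is complete, we may define $\cli(A)=\lim_n\cli_n(A)$ in norm, uniformly in $A$. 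Complete positivity passes to norm limits, because each amplification $\cli_n(A)^{(m)}$ converges in norm and the positive cone of $M_m(\bh)$ is closed. So $\cli(A)\in CP(\cla,\bh)$.

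\emph{Step 2: countable additivity.} By Remark~\ref{reformulation of definition of instruments}, it suffices to show that each $\cli_{h,k}$ is a $\cla^*$-valued measure, or equivalently that each $\cli_{a,h,k}$ is a complex measure. Finite additivity follows at once from pointwise limits. For countable additivity, note that $\nu_n:=(\cli_n)_{a,h,k}$ is Cauchy in the total variation norm of complex measures, since
\[
|\nu_n-\nu_m|(X)\le\|a\|\,\|h\|\,\|k\|\,\|\cli_n-\cli_m\|_{\mathrm{TV}}.
\]
The Banach space $ca(X)$ of complex measures with the total variation norm is complete, so $\nu_n$ converges in that norm to a complex measure $\nu$. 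Total variation convergence implies setwise convergence, so $\nu(A)=\lim_n\nu_n(A)=\langle h,\cli(A)(a)k\rangle$. Therefore $\cli_{a,h,k}=\nu$ is countably additive, and $\cli\in Ins_{\clh}(X,\cla)$.

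\emph{Step 3: TV convergence.} Fix $\varepsilon>0$ and choose $N$ with $\|\cli_n-\cli_m\|_{\mathrm{TV}}<\varepsilon$ for all $n,m\ge N$. Take unit vectors $h,k$ and a finite partition $E_1,\dots,E_p$ of $X$. Then
\[
\sum_i\|(\cli_n-\cli_m)_{h,k}(E_i)\|<\varepsilon .
\]
Each term converges as $m\to\infty$, because $\|(\cli_m)_{h,k}(E_i)-\cli_{h,k}(E_i)\|_{\cla^*}\le\|\cli_m(E_i)-\cli(E_i)\|\to 0$. The sum is finite, so letting $m\to\infty$ gives $\sum_i\|(\cli_n-\cli)_{h,k}(E_i)\|\le\varepsilon$. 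Taking the supremum over partitions and over unit vectors $h,k$ yields $\|\cli_n-\cli\|_{\mathrm{TV}}\le\varepsilon$ for all $n\ge N$.

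I expect Step 2 to be the only real obstacle. A pointwise limit of measures need not be countably additive, and the argument avoids that problem by exploiting convergence in total variation through the completeness of $ca(X)$. An alternative route would be the Vitali–Hahn–Saks theorem.
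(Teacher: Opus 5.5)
Your proof is correct and follows the same plan as the paper's. Both arguments build the setwise limit from the Cauchy estimate, check countable additivity and complete positivity of the limit, and then pass to the limit over finite partitions to get $\|\cli_n-\cli\|_{\mathrm{TV}}\to0$. The differences are in execution, and they favour your version. Your operator-norm bound $\|\cli_n(A)-\cli_m(A)\|\le\|\cli_n-\cli_m\|_{\mathrm{TV}}$ in $\mathcal{B}(\cla,\bh)$ gives $\cli(A)$ directly as a bounded CP map, whereas the paper defines $\cli(E)$ through sesquilinear forms and proves countable additivity of $\cli_{h,k}$ by a direct $\epsilon/3$ argument where you invoke completeness of $ca(X)$ for $\cli_{a,h,k}$. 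Your Step 3 also derives the needed inequality carefully, where the paper simply states $|(\cli^n-\cli)_{h,k}|(X)=\lim_m|(\cli^n-\cli^m)_{h,k}|(X)$.
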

\begin{proof}
    It is straightforward to verify that $\|\cdot\|_{\mathrm{TV}}$ defines a norm. We prove that
    $Ins_{\clh}(X,\cla)$ is complete with respect to this norm.

Let $\{\cli^n\}_{n=1}^\infty$ be a Cauchy sequence in
$Ins_{\clh}(X,\cla)$. Then, for every $h,k\in\clh$ with
$\|h\|=\|k\|=1$,$
|(\cli^m-\cli^n)_{h,k}|(X)\longrightarrow0
\qquad (m,n\to\infty).$
Hence, for every $E\in\ox$,
\[
\|{\cli^m}_{h,k}(E)-{\cli^n}_{h,k}(E)\|
\le
|(\cli^m-\cli^n)_{h,k}|(E)
\le
|(\cli^m-\cli^n)_{h,k}|(X),
\]
so $\{{\cli^n}_{h,k}(E)\}$ is a Cauchy sequence in $\cla^*$.
Since $\cla^*$ is complete, there exists
$\cli_{h,k}(E)\in\cla^*$ such that
$
\cli_{h,k}(E)
=
\lim_{n\to\infty}\cli^n_{h,k}(E).$

Define $\cli(E):\cla\to\bh$ by
\[
\langle h,\cli(E)(a)k\rangle
=
\cli_{h,k}(E)(a),
\qquad
a\in\cla,\ h,k\in\clh.
\]
Following Remark \ref{reformulation of definition of instruments}, we next show that $\cli_{h,k}$ is a $\cla^*$-valued measure for $h,k\in\clh$.
Let $A=\cup_{j=1}^\infty A_j,$ union of disjoint sets $\{A_j\}$.
Fix $\epsilon>0$ and choose $N$ such that for $n,m\ge N$
\[
\|\cli^n-\cli^m\|_{\mathrm{TV}}<\epsilon/3. 
\]
Therefore,
 \begin{equation*}
    \Bigl\|\cli_{h,k}(A) - \sum_{j=1}^K \cli_{h,k}(A_j)\Bigr\| \leq \|\cli_{h,k}(A) - \cli_{h,k}^N(A)\| + \Bigl\|\cli_{h,k}^N(A) - \sum_{j=1}^K \cli_{h,k}^N(A_j)\Bigr\| + \sum_{j=1}^K \|\cli_{h,k}^N(A_j) - \cli_{h,k}(A_j)\|.
    \end{equation*}
Taking $K \to \infty$ and $N$ sufficiently large, each term is bounded by $\epsilon/3$, proving countable additivity.

Finally, for every finite collection
$\{a_i\}_{i=1}^r\subseteq\cla$ and
$\{h_i\}_{i=1}^r\subseteq\clh$,
\[
\begin{aligned}
\sum_{i,j}
\langle
h_i,
\cli(E)(a_i^*a_j)
h_j
\rangle
&=
\sum_{i,j}
\cli_{h_i,h_j}(E)(a_i^*a_j)  \\
&=
\lim_{n\to\infty}
\sum_{i,j}
\cli^n_{h_i,h_j}(E)(a_i^*a_j)
\ge0,
\end{aligned}
\]
since each $\cli^n(E)$ is completely positive.
Thus $\cli(E)\in CP(\cla,\bh)$ for every $E\in\ox$, so
$\cli$ is an instrument.
Finally, for every $h,k$,
\[
|(\cli^n-\cli)_{h,k}|(X)
=
\lim_{m\to\infty}
|(\cli^n-\cli^m)_{h,k}|(X),
\]
and taking the supremum over unit vectors gives
\[
\|\cli^n-\cli\|_{\mathrm{TV}}
\longrightarrow0.
\]
Hence $Ins_{\clh}(X,\cla)$ is complete.
\end{proof}

The foundation of integration theory rests upon a suitable notion of measurability and in particular, integrability for $\cla$-valued functions with respect to a given instrument $\cli$. In general, three notions of measurability are commonly considered: strong, Borel, and weak measurability. In this context, our focus will be on the first and third, namely, strong and weak measurability. We begin by introducing their definitions, with particular attention to strong measurability, as it forms the foundation of our subsequent construction of integration. Before doing so, we introduce the notion of a simple function, which, as in the classical theory, serves as the basic building block for the development of integration. Throughout this section, by $\cli,$ we denote a CP instrument $\cli:\ox\to CP(\cla,\bh).$
\begin{dfn}[Simple functions]\label{simple functions}
	A function $f:X\to \cla$ is said to be simple if there exists $a_1,a_2,\cdots,a_n\in\cla$ and $A_1,A_2,\cdots,A_n\in\ox$ such that $f=\sum_{i=1}^n a_i1_{A_i},$ where $1_{A_i}(x)=1$ if $x\in A_i$ and $1_{A_i}(x)=0$ if $x\notin A_i.$ 
	
\end{dfn}

	In his setting, Bartle employed the concept of semi-variation of a vector-valued measure to define various modes of convergence, including almost everywhere convergence, convergence in measure, and almost uniform convergence. In a similar spirit, we extend these notions to the vector-valued measures $\cli_{h,k}$ induced by an instrument $\cli$, for each $h, k \in \clh$.
\begin{dfn}[Almost everywhere convergence]
	Let $\{f_n:X\to\cla\}$ be a sequence of functions. We say that, $\{f_n\}$ converges to $f$ point-wise $\cli$-almost everywhere if $lim_n ~\|f_n(x)-f(x)\|=0,$ for $\|\cli_{h,k}\|$-almost every $x\in X$ and for all $h,k\in\clh.$
\end{dfn}

\begin{dfn}[Strongly $\cli$-measurable functions]
	A function $f:X\to \cla$ is called strongly $\cli$-measurable if for every $h,k\in \clh$ there exists a sequence of simple functions ${f^{h,k}}_n:X\to\cla,$ such that $\{{f^{h,k}}_n\}$ converges to $f,$  $\|\cli_{h,k}\|$-almost everywhere.  	
\end{dfn}

Next we introduce the weakly measurable functions.
\begin{dfn}[Weakly $\cli$-measurable functions]
	A function $f:X\to \cla$ is said to be weakly $\cli$-measurable if $a^*f$ is strongly $\cli$-measurable for every $a^*\in\cla^*.$
	
\end{dfn}
In general, the class of strongly measurable functions is a proper subclass of the class of weakly measurable functions. For instance, the identity map on $l^\infty((0,1))$ is weakly measurable, since every continuous linear functional on $l^\infty((0,1))$ is measurable when composed with the identity. However, it is not strongly measurable, as its range is non-separable; this follows from the following theorem (see Theorem~\ref{petti-type theorem}). 
However, under certain natural conditions, Petti (see Theorem II.2,\cite{uhl_vectormeasures}) showed that weak measurability coincides with strong measurability in the context of vector-valued measures. Here we establish a corresponding result for instruments. 
\begin{thm}\label{petti-type theorem}
	 A function $f:X\to \cla$ is strongly $\cli$-measurable if and only if
	\begin{enumerate}
		\item $f$ is $\cli$-essentially separably valued i.e. for every $h,k\in\clh$ there exists $A_{h,k}\in\ox$ with $\|\cli_{h,k}\|(A_{h,k})=0$ and $f(X\setminus A_{h,k})$ is a separable subset of $\cla$ \label{separable range} and
		\item f is weakly $\cli$-measurable.\label{strong implies weak}
	\end{enumerate}
\end{thm}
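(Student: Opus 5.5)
The plan is to reduce the statement, for each fixed pair $h,k\in\clh$, to the classical Pettis measurability theorem (Theorem II.2 in \cite{uhl_vectormeasures}) applied to the $\cla^*$-valued measure $\cli_{h,k}$. All three notions in the statement (strong measurability, essential separability, weak measurability) are defined ``for every $h,k$''. They refer only to the $\|\cli_{h,k}\|$-null sets. So it suffices to fix $h,k$ and prove the equivalence for the single control given by the null ideal of the semi-variation $\|\cli_{h,k}\|$. By Remark~\ref{vector measures associated to instruments}, $\cli_{h,k}$ has bounded variation, so $|\cli_{h,k}|$ is a finite positive measure. Moreover $\|\cli_{h,k}\|(A)=0$ if and only if $|\cli_{h,k}|(A)=0$, since $\|\cli_{h,k}\|\le|\cli_{h,k}|$ and a set of zero semi-variation has all its subsets of measure zero, hence zero variation. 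We may therefore work with the finite measure $\mu=|\cli_{h,k}|$ throughout.

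\emph{Forward direction.} Suppose $f$ is strongly $\cli$-measurable and fix $h,k$. Take simple $f_n=f_n^{h,k}$ with $f_n\to f$ off a $\mu$-null set $A_{h,k}$. The countable union of the finite ranges of the $f_n$ is countable, and $f(X\setminus A_{h,k})$ lies in its norm closure. This gives condition (1). For condition (2), given $a^*\in\cla^*$, the scalar simple functions $a^*\circ f_n$ converge to $a^*\circ f$ off $A_{h,k}$, by continuity of $a^*$. Hence $a^*f$ is strongly measurable.

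\emph{Converse.} Fix $h,k$ and let $Y\subset X$ be the complement of the null set $A_{h,k}$ from (1). Then $f(Y)$ lies in a separable closed subspace $\cla_0\subseteq\cla$, which we may take to be spanned by a countable dense set $\{d_m\}$. By Hahn--Banach, choose a norming sequence $\{\phi_j\}\subset\cla^*$ of norm-one functionals with $\|b\|=\sup_j|\phi_j(b)|$ for all $b\in\cla_0$. Each $\phi_j\circ f$ is $\mu$-measurable by (2), after modification on a null set; the countably many exceptional null sets are absorbed into one null set $N$. It follows that for each $m$ the scalar function $x\mapsto\|f(x)-d_m\|=\sup_j|\phi_j(f(x))-\phi_j(d_m)|$ is $\mu$-measurable on $Y\setminus N$.

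Now we run the standard Pettis construction. For each $n$ and $x$, let $m(n,x)$ be the least $m\le n$ minimizing $\|f(x)-d_m\|$, and set $f_n(x)=d_{m(n,x)}$ on $Y\setminus N$ and $0$ elsewhere. The sets $\{x: m(n,x)=m\}$ are measurable, being defined by finitely many inequalities among measurable functions, so each $f_n$ is simple. Density of $\{d_m\}$ in $\cla_0$ gives $\|f_n(x)-f(x)\|\to0$ on $Y\setminus N$, i.e.\ $\|\cli_{h,k}\|$-almost everywhere. Since $h,k$ were arbitrary, $f$ is strongly $\cli$-measurable.

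\emph{Main obstacle.} The delicate step is the measurability bookkeeping in the converse. Weak measurability only gives, for each functional, approximating simple functions off a functional-dependent null set, that is, measurability with respect to the $\mu$-completion. We must restrict to countably many functionals, the norming sequence together with the $d_m$, so that the null sets can be collected into one. We must also confirm that $\|\cli_{h,k}\|$-null and $|\cli_{h,k}|$-null sets coincide, so the completion is the right one. Beyond this, the argument is the classical one, carried out separately for each $h,k$.
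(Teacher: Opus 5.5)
Your proposal is correct and follows essentially the same route as the paper. The forward direction uses the closed span of the countably many values of the approximating simple functions, and the converse is the classical Pettis nearest-point construction, made measurable through a Hahn--Banach norming sequence. Your version is slightly more careful than the paper's in two places. You take the norming sequence only on the separable subspace $\cla_0$, whereas the paper asserts $\sup_n|a_n^*(a)|=\|a\|$ and $\lim_n g_n(a)=a$ for all $a\in\cla$. You also explicitly absorb the functional-dependent null sets into a single null set.
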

\begin{proof}
	
	Assume that $f$ is strongly $\cli$-measurable. Then for every $h,k\in\clh$ there exists a sequence of simple functions ${g_n}$ and a measurable set $A_{h,k}\in\ox$ such that $\lim_{n \to \infty}\| g_n(x) - f(x)\|=0,$ for all $x \in X \setminus A_{h,k}$.
	Each $g_n$ takes values in a finite-dimensional subspace $\cla_n := \textit{span}~ g_n(X)$. Define $$\cla'=\overline{\textit{span}\bigcup\cla_n}.$$ Then $\cla'$ is a separable subspace of $\cla$, and clearly $f(X \setminus A_{h,k}) \subset \cla'$.
	Hence, the assertion of \ref{separable range} follows.
	The proof of \ref{strong implies weak} follows immediately from the assumptions, thereby completing the proof of one direction.
	
	To prove the converse, for $h,k\in\clh,$ let $A_{h,k} \in\ox$ be chosen such that $\|\cli_{h,k}\|(A_{h,k})=0$ and $f(X\setminus A_{h,k})$ is separable. Let $\{a_n\}$ be a countable dense subset of $f(X\setminus A_{h,k}).$ 
	Let us define $g_n:\cla\to\{a_1,a_2,\cdots,a_n\}$ such that $\forall a\in \cla$, $g_n(a)$ is the one among $\{a_1,a_2,\cdots,a_n\}$ having the smallest index in $\{1,2,\cdots,n\}$ for which $$\|a-g_n(a)\|=min_{1\leq j\leq n}\|a-a_j\|.$$  Then $lim_n g_n(a)=a$ for all $a\in\cla.$ We define $f^{h,k}_n:X\setminus A_{h,k}\to\cla$ by $f^{h,k}_n(x)=g_n(f(x))$ for all $x\in X\setminus A_{h,k}.$ Clearly, $lim_n f^{h,k}_n(x)=f(x),~\forall~x\in X\setminus A_{h,k}$ i.e. $f^{h,k}_n$ converges to $f$ point-wise. It remains to verify that $f^{h,k}_n$ are measurable i.e. $({f^{h,k}_n})^{-1}(a_k)\in\ox$ for $1\leq k \leq n.$ Here $({f^{h,k}_n})^{-1}(a_k)=\{x\in X\setminus A_{h,k}: g_n(f(x))=a_k\}=\{x\in X\setminus A_{h,k}: \|f(x)-a_k\|=min_{1\leq j\leq n}\|f(x)-a_j\|\}.$ By Hahn Banach theorem, we choose a sequence $\{a^*_n\}\subset\cla^*$ such that $sup_n|a^*_n(a)|=\|a\|.$ Since infimum and supremum of countably many measurable functions are measurable, the function $$x\mapsto \|f(x)-a_j\|=min_{1\leq j\leq n}sup_m|a^*_m(f(x)-a_j)|$$ is measurable as well. Therefore, each set $({f^{h,k}_n})^{-1}(a_k)$ is measurable.
	
\end{proof}
\begin{crlre}\label{uniform limit of simple functions}
	It is evident from the previous theorem  for separably valued functions, both notions of measurability coincide. 
\end{crlre}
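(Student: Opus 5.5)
The corollary is a direct consequence of Theorem~\ref{petti-type theorem}: we check its two conditions for a separably valued $f$. Suppose $f:X\to\cla$ has separable range $f(X)$ (or merely $\cli$-essentially separable range). One direction needs no separability. If $f$ is strongly $\cli$-measurable, then the forward implication of Theorem~\ref{petti-type theorem} (item~\ref{strong implies weak}) shows that $f$ is weakly $\cli$-measurable. For completeness one can also see this directly. If $f^{h,k}_n\to f$ holds $\|\cli_{h,k}\|$-almost everywhere with $f^{h,k}_n$ simple, then for each $a^*\in\cla^*$ the scalar simple functions $a^*\circ f^{h,k}_n$ converge to $a^*\circ f$ on the same co-null set. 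This follows from $|a^*(f^{h,k}_n(x))-a^*(f(x))|\le\|a^*\|\,\|f^{h,k}_n(x)-f(x)\|$.

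For the converse, assume $f$ is weakly $\cli$-measurable and separably valued. Condition~\ref{separable range} of Theorem~\ref{petti-type theorem} holds trivially. For every $h,k\in\clh$ we take $A_{h,k}=\emptyset$; then $\|\cli_{h,k}\|(A_{h,k})=0$ and $f(X\setminus A_{h,k})=f(X)$ is separable. If $f$ is only essentially separably valued, we use the given null sets $A_{h,k}$ instead. Condition~\ref{strong implies weak} is the hypothesis itself. The converse direction of Theorem~\ref{petti-type theorem} then yields, for each pair $h,k$, simple functions $f^{h,k}_n$ converging to $f$ off $A_{h,k}$. Hence $f$ is strongly $\cli$-measurable.

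The only point needing care is the measurability step buried in the proof of Theorem~\ref{petti-type theorem}. There one needs $x\mapsto\|f(x)-a_j\|$ to be measurable, and this uses a countable norming family $\{a^*_m\}$. Such a family exists once we restrict to the separable closed subspace $\cla'=\overline{\mathrm{span}}\,f(X)$. By Hahn--Banach we pick $a^*_m\in\cla^*$ of norm at most one that norm a countable dense subset of $\cla'$, and then $\sup_m|a^*_m(b)|=\|b\|$ for all $b\in\cla'$. Since $f(x)-a_j\in\cla'$, this is exactly where separability enters. I would state this explicitly, since a general (non-separable) $C^*$-algebra $\cla$ need not admit a countable norming set. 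With that remark in place, the two notions of measurability coincide for separably valued functions.
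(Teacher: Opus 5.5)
Your proof is correct and is exactly the intended argument: the corollary is the direct specialization of Theorem~\ref{petti-type theorem}, where condition (1) holds automatically (with $A_{h,k}=\emptyset$ when $f(X)$ is separable) and the forward direction needs no separability at all. Your remark that the countable norming family $\{a_m^*\}$ should be chosen for the separable subspace $\overline{\mathrm{span}}\,f(X)$ rather than for all of $\cla$ is a valid sharpening of a step that the paper's proof of that theorem asserts without justification.
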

\begin{crlre}
	It is easy to verify from the Theorem \ref{petti-type theorem} that the collection of strongly-measurable functions on $X$ is a linear space which is closed under the operation of convergence in point-wise limit of sequences. 
\end{crlre}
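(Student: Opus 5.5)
The plan is to verify the two assertions separately: linearity, and closure under $\cli$-almost everywhere pointwise limits. Throughout, I will work one pair $h,k\in\clh$ at a time, since both strong measurability and almost everywhere convergence are defined separately for each semi-variation $\|\cli_{h,k}\|$.

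\emph{Linearity.} Let $f,g$ be strongly $\cli$-measurable and $\lambda\in\bbc$. Fix $h,k$ and choose simple functions $f_n^{h,k}\to f$ and $g_n^{h,k}\to g$ off $\|\cli_{h,k}\|$-null sets $N_1$ and $N_2$. Then $\lambda f_n^{h,k}+g_n^{h,k}$ is simple, because any finite linear combination of functions of the form $\sum a_i1_{A_i}$ is again of that form. It converges to $\lambda f+g$ off $N_1\cup N_2$, and $N_1\cup N_2$ is null because the semi-variation is subadditive. This argument does not need Theorem~\ref{petti-type theorem}. Alternatively, one can use the theorem directly: weak measurability is preserved under linear combinations, and a union of two separable ranges spans a separable subspace.

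\emph{Closure under limits.} Suppose each $f_m$ is strongly $\cli$-measurable and $f_m\to f$ pointwise $\cli$-a.e. Here I invoke Theorem~\ref{petti-type theorem} in both directions. First, fix $h,k$ and take null sets $A_m$ such that $f_m(X\setminus A_m)$ lies in a separable subspace $\cla_m$. Take also a null set $B$ off which $f_m\to f$. On the complement of $B\cup\bigcup_m A_m$, the value $f(x)$ is a norm limit of elements of $\overline{\mathrm{span}}\bigcup_m\cla_m$, which is separable. Second, for every $a^*\in\cla^*$, the scalar functions $a^*f_m$ are measurable (in the $\|\cli_{h,k}\|$-a.e.\ sense) and converge a.e.\ to $a^*f$. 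The limit is therefore measurable by the classical scalar result, applied with respect to the finite measure dominating $\|\cli_{h,k}\|$ (for instance $|\cli_{h,k}|$). So $f$ is weakly measurable, and Theorem~\ref{petti-type theorem} gives strong measurability.

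The main obstacle is the null-set bookkeeping in the second step. It requires that a \emph{countable} union of $\|\cli_{h,k}\|$-null sets is null. I would justify this using the inequality $\|\cli_{h,k}\|\le|\cli_{h,k}|$ together with the countable subadditivity of the finite measure $|\cli_{h,k}|$ from Remark~\ref{vector measures associated to instruments}. A further point is that ``measurable a.e.'' for scalar functions should be read with respect to $|\cli_{h,k}|$, so that the standard completion argument applies.
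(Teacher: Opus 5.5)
Your proposal is correct and is essentially the argument the paper has in mind; the paper only says the corollary follows from Theorem~\ref{petti-type theorem}, and checking, one pair $h,k$ at a time, that essential separability and weak measurability survive linear combinations and $\cli$-a.e.\ limits is that verification. One small repair to the null-set bookkeeping: the inequality $\|\cli_{h,k}\|\le|\cli_{h,k}|$ only shows that $|\cli_{h,k}|$-null sets are $\|\cli_{h,k}\|$-null, whereas you need the converse, which holds for the standard (monotone) semi-variation because $\|\cli_{h,k}\|(A)=0$ forces $\cli_{h,k}(B)=0$ for every measurable $B\subseteq A$, hence $|\cli_{h,k}|(A)=0$, so the two have the same null sets and both your countable-union step and your passage to the scalar measure $|\cli_{h,k}|$ are justified.
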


In his formulation, Bartle defined the integral initially for all functions that can be approximated in measure by simple functions. He then showed that, under certain regularity conditions on the underlying vector measure, this class coincides with the set of functions that can be approximated by simple functions almost everywhere.
He referred to the corresponding regularity condition as the $*$-property of the underlying vector measure $\mu$, which is stated as follows: the measure $\mu$ has the $*$-property if there exists a finite non-negative measure $\nu,$ such that $$\mu(A)=0 ~\textit{if and only if}~ \nu(A)=0.\qquad(\textit{$*$-property})$$ 
This property ensures that the measure $\mu$ induces a well-behaved integration theory analogous to the scalar case. Bartle provided several sufficient conditions under which a vector-valued measure $\mu$ satisfies the $*$-property, bounded variation being one of them. Thus, whenever $\mu$ possesses bounded variation, the theory simplifies. Moreover, since for any instrument $\cli:\ox \to CP(\cla,\bh)$, each $\cla^*$-valued measure $\cli_{h,k}$ has bounded variation (as noted in Remark~\ref{vector measures associated to instruments}) for every $h,k \in \clh$, we can develop a consistent theory of integration with respect to an instrument by appealing to Bartle’s framework of vector-valued integration.

In the following, we recall the framework of Bartle integration adapted to our setting. 
Let $\mu:\ox \to \clb^*$ be a vector-valued measure. 
For a simple function $f:X\to\clb$ defined by $f(x) = \sum_i a_i 1_{A_i}(x)$, its Bartle integral with respect to $\mu$ is given by
\[
\int_X f\, d\mu := \sum_i \mu(A_i)(a_i).
\]

\begin{dfn}[Bartle integral]\label{bartle_integral}\index{Bartle integral}      
	Let $\mu:\ox \to \clb^*$ be a vector-valued measure. 
	A function $f:X\to\clb$ is said to be \emph{Bartle integrable} with respect to $\mu$ if there exists a sequence of simple functions $\{f_n\}$ on $X$ such that:
	\begin{enumerate}[label=(\roman*)]
		\item $f_n(x) \to f(x)$ pointwise $\|\mu\|$-almost everywhere; and
		\item 
        the sequence of indefinite integrals
\[
E\longmapsto\int_Ef_n\,d\mu
\]
converges uniformly in $E\in\ox$.
	\end{enumerate}
	In this case, we define
	\[
	\int_X f\, d\mu := \lim_{n \to \infty} \int_X f_n\, d\mu.
	\]
\end{dfn}

Next, we recall one of the most crucial results from Bartle’s paper ( \cite[Theorem 10]{bartle_integral}) in the present setting, which will be used on several occasions throughout this chapter.
\begin{thm}\label{vitali-theorem}
	 Let $\mu:\ox\to\clb^*$ be a vector-valued measure possessing the $*$-property.
	Suppose ${f_n}$ is a sequence of $\mu$-integrable functions such that:
	\begin{enumerate}[label=(\roman*)]
		\item the sequence $\{f_n\}$ converges to $f,$ $\|\mu\|$ almost everywhere;
		\item given $\epsilon>0$ there is a $\delta>0$ such that if $E\in\ox$ and $\|\mu\|(E)<\delta,$ then $|\int_E f_nd\mu|<\epsilon,$ for all $n\in\bbn .$ 
	\end{enumerate}
	Then $f$ is integrable with respect to $\mu$, and moreover,
	$\int_E fd\mu=lim_{n\to\infty}\int_E f_nd\mu.$ 
\end{thm}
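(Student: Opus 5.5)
This statement is Bartle's Vitali-type convergence theorem, recalled from \cite[Theorem 10]{bartle_integral}. The plan is to reproduce Bartle's argument in the present dual setting $\mu:\ox\to\clb^*$ with $f_n:X\to\clb$. The overall strategy is to build a single sequence of simple functions satisfying both conditions of Definition~\ref{bartle_integral} for $f$. The main tools are the $*$-property, which converts smallness of $\|\mu\|$ into smallness of a finite scalar measure $\nu$, and Egoroff's theorem with respect to $\nu$.

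\textbf{Step 1: control measures.} Fix a finite positive measure $\nu$ with the same null sets as $\mu$, given by the $*$-property. A standard $\epsilon$--$\delta$ argument (Bartle's lemma, using countable additivity of $\mu$ and finiteness of $\nu$) shows that $\|\mu\|(E)\to0$ whenever $\nu(E)\to0$. Next, for each $n$ choose a simple sequence $f_{n,m}$ witnessing integrability of $f_n$. Because the indefinite integrals of $f_{n,m}$ converge uniformly in $E$, the indefinite integral $E\mapsto\int_E f_n\,d\mu$ is countably additive and vanishes on $\|\mu\|$-null sets. Hypothesis (ii) then says these integrals are uniformly absolutely continuous with respect to $\|\mu\|$, and hence, by the first observation, with respect to $\nu$.

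\textbf{Step 2: diagonal simple sequence.} By Egoroff's theorem for the finite measure $\nu$, applied to $f_n\to f$ and to $f_{n,m}\to f_n$ (all pointwise $\nu$-a.e., since the null sets agree), one can choose indices $m_n$ and sets $F_n$ with $\nu(X\setminus F_n)<2^{-n}$ such that the following two estimates hold:
\[
\|f_{n,m_n}-f_n\|<2^{-n}\ \text{on } F_n,
\qquad
\sup_E\Big|\int_E f_{n,m_n}\,d\mu-\int_E f_n\,d\mu\Big|<2^{-n}.
\]
Then $g_n:=f_{n,m_n}$ is simple and $g_n\to f$ holds $\nu$-a.e., hence $\|\mu\|$-a.e. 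It remains to show that $E\mapsto\int_E g_n\,d\mu$ is uniformly Cauchy in $E$. By the second estimate above, it suffices to show that $E\mapsto\int_E f_n\,d\mu$ is uniformly Cauchy.

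\textbf{Step 3: uniform Cauchy property, the main obstacle.} Given $\epsilon>0$, take $\delta$ from uniform absolute continuity in Step 1. By Egoroff, pick $G$ with $\nu(X\setminus G)<\delta$ on which $f_n\to f$ uniformly. For any $E$, split
\[
\int_E(f_n-f_m)\,d\mu=\int_{E\cap G}(f_n-f_m)\,d\mu+\int_{E\setminus G}f_n\,d\mu-\int_{E\setminus G}f_m\,d\mu .
\]
The last two terms are each less than $\epsilon$ by hypothesis (ii). For the first term I would use the estimate $\big|\int_A h\,d\mu\big|\le\sup_A\|h\|\cdot\|\mu\|(A)$. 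This holds for simple $h$ by the definition of semivariation (pair $\mu(E_i)\in\clb^*$ with the vectors $h|_{E_i}$), and it passes to integrable $h$ by taking limits. Since $\|\mu\|(X)<\infty$ (semivariation of a measure is bounded), the first term is at most $\sup_G\|f_n-f_m\|\,\|\mu\|(X)$, which tends to $0$. The delicate point is exactly this norm estimate for non-simple integrands together with the $\nu$-versus-$\|\mu\|$ translation; everything else is routine.

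\textbf{Step 4: conclusion.} By Definition~\ref{bartle_integral}, $f$ is integrable and $\int_E f\,d\mu=\lim_n\int_E g_n\,d\mu$, uniformly in $E$. Combining this with the second estimate of Step 2 gives $\int_E f\,d\mu=\lim_n\int_E f_n\,d\mu$, which is the claimed formula.
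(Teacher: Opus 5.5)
The paper gives no proof of this statement; it only quotes Bartle's Theorem~10. Your route (Egoroff with respect to $\nu$, a diagonal simple sequence, and a uniform Cauchy estimate from splitting along an Egoroff set) is the standard Vitali-type argument behind that result, and its structure is sound. However, one assertion is false as stated, and you rely on it in two places.

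\textbf{The gap: finiteness of $\|\mu\|$.} You justify $\|\mu\|(X)<\infty$ by saying that the semivariation of a measure is bounded. The estimate $|\int_A h\,d\mu|\le\sup_A\|h\|\,\|\mu\|(A)$ that you use in Step~3 pairs $\mu(E_i)$ with \emph{different} vectors on different pieces. It is therefore valid only for Bartle's semivariation $\sup|\sum_i\mu(E_i)(x_i)|$ (partitions of $A$, $\|x_i\|\le1$). For a $\clb^*$-valued measure this quantity is exactly the total variation $|\mu|(A)$, which need not be finite. The classical boundedness theorem concerns $\sup|\phi\circ\mu|(X)$ over $\phi$ in the unit ball of $\clb^{**}$, and that quantity does not dominate such integrals.

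The same finiteness is hidden in Step~1. The standard argument (take $\nu(E_k)<2^{-k}$, $\|\mu\|(E_k)\ge\epsilon$, and $F=\bigcap_j\bigcup_{k\ge j}E_k$) needs continuity from above of $\|\mu\|$, which fails when $\|\mu\|$ is infinite.

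The hypothesis cannot simply be dropped. Take $\mu(A)(x)=\int_A x\,d\lambda$ for $x\in L^2[0,1]$. The $\mu$-null sets are the Lebesgue-null sets, so $\mu$ has the $*$-property in the null-set form written in the paper. But $\|\mu\|(E)=|\mu|(E)=\infty$ whenever $\lambda(E)>0$, so (ii) holds vacuously. Now let $\mathbf{1}\in L^2[0,1]$ be the constant function and $f_n=n1_{(0,1/n]}\mathbf{1}$. Then $f_n\to0$ everywhere, yet $\int_X f_n\,d\mu=1$ for every $n$.

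\textbf{The repair.} State $\|\mu\|(X)<\infty$ as a hypothesis rather than derive it. This costs nothing in context, for two reasons.
\begin{enumerate}
\item Bartle's $*$-property asks that $\|\mu\|(E)\to0$ as $\nu(E)\to0$, not merely that the null sets agree. This gives your Step~1 translation at once. It also forces $\|\mu\|(X)<\infty$: split $X$ into finitely many sets of small $\nu$-measure and finitely many atoms $A$ of $\nu$, and note that $|\mu|(A)=\|\mu(A)\|$ on each such atom.
\item Every measure to which the paper applies the theorem is some $\cli_{h,k}$, and these have bounded variation.
\end{enumerate}
With this in place, Steps~2--4 go through as written.

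\textbf{A smaller point.} Passing the sup bound from simple to integrable $h$ is not just a matter of taking limits. A.e.\ convergence of the simple approximants $h_k$ does not control $\sup_A\|h_k\|$. Instead, apply Egoroff to the $h_k$ and combine it with the uniform absolute continuity of their indefinite integrals, which follows from uniform convergence in $E$. State the resulting bound with the essential supremum.
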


As outlined earlier, to develop a rigorous theory of integration, we begin by defining the integral on the class of simple functions and subsequently extend it to the space of strongly measurable functions.
Let $f:X\to \cla$ be a simple function of the form $f=\sum_{i=1}^n a_i1_{A_i},$ as mentioned in Definition \ref{simple functions}, then we define its integral over any set $A\in\ox$ as $$\int_{A} fd\cli=\sum_{i=1}^n\cli(A\cap A_i,a_i).$$

\begin{dfn}[$\cli$-integrability]
	A function $f$ on $X$ is said to be \emph{$\cli$-integrable} if it satisfies the following conditions:
	\begin{enumerate}[label=(\roman*)]
		\item $f$ is strongly $\cli$-measurable;
		\item for every $h,k \in \clh$, the function $f$ is $\cli_{h,k}$-integrable in the sense of Bartle (Definition~\ref{bartle_integral}).
	\end{enumerate}
	In this case, the \emph{integral of $f$ with respect to $\cli$} is defined by
	\[
	\langle h, \Bigl(\int_X f\, d\cli\Bigr) k\rangle 
	= \int_X f\, d\cli_{h,k},
	\qquad \text{for all } h,k \in \clh.
	\]
\end{dfn}

\begin{rmrk}\label{relation to complex integration}
    It follows from the definition of integration, if $a\in\cla $ is fixed and $f: X\to\bbc$ is a measurable function, then for the function $af: X\to\cla,$  $\langle h,(\int af d\cli)k\rangle=\int f d\cli_{a,h,k}$ where $\cli_{a,h,k}$ is the complex measure mentioned in \ref{eq:notation for cli_a,h,k}.
\end{rmrk}

It follows from the definition of integrability and Theorem \ref{petti-type theorem} that :

\begin{thm}\label{separably valued bounded measurable functions integrability}
	Every $\cli$-essentially separable and measurable function $f : X \to \cla$ is $\cli$-integrable and if it is bounded then satisfies
	\[
	| \int_E f\, d\cli_{h,k}| \leq \|f\|_{\infty}\|\cli_{h,k}\|(E),
	\]
	for every $h,k \in \clh$ and $E \in \ox$.
\end{thm}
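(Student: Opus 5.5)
The plan is to fix $h,k\in\clh$ and reduce everything to the single $\cla^*$-valued measure $\mu:=\cli_{h,k}$. By Remark~\ref{vector measures associated to instruments}, $\mu$ has bounded variation. Hence $\nu:=|\mu|$ is a finite positive measure with $\mu(A)=0$ iff $\nu(A)=0$ (more precisely $\|\mu\|(A)=0$ iff $\nu(A)=0$, since $\|\mu\|\le|\mu|$ and $|\mu|$ vanishes on $\|\mu\|$-null sets). So $\mu$ has Bartle's $*$-property, and Theorem~\ref{vitali-theorem} is available for $\mu$.

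Since $f$ is essentially separably valued and measurable, Theorem~\ref{petti-type theorem} gives strong $\cli$-measurability. I would then construct the approximating simple functions as in that proof: choose a dense sequence $\{a_n\}$ in $f(X\setminus A_{h,k})$ and set $f_n=g_n\circ f$ off $A_{h,k}$ and $f_n=0$ on $A_{h,k}$. In the bounded case I would modify this so that $\|f_n(x)\|\le 2\|f\|_\infty$; for instance, take only those $a_j$ lying in the ball of radius $\|f\|_\infty$, which is automatic since the $a_j$ lie in the range of $f$. For a simple function $s=\sum_i b_i1_{B_i}$ with disjoint $B_i$, one has
\[
\Bigl|\int_E s\,d\mu\Bigr|=\Bigl|\sum_i\mu(E\cap B_i)(b_i)\Bigr|\le \sup_i\|b_i\|\,\|\mu\|(E),
\]
by the definition of semivariation: normalize the $b_i$ and test against the functional on $\cla^*$ given by evaluation at the $b_i$, or use $|\mu|$ as an upper bound. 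Applying this to $f_n$ gives $|\int_E f_n\,d\mu|\le\|f\|_\infty\|\mu\|(E)$ uniformly in $n$, which is exactly hypothesis (ii) of Theorem~\ref{vitali-theorem}. Hypothesis (i) holds by construction, so $f$ is $\mu$-integrable, and $\int_E f\,d\mu=\lim_n\int_E f_n\,d\mu$. Passing to the limit in the bound yields the inequality in the statement.

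The main obstacle is the unbounded case, where uniform integrability in hypothesis (ii) can fail; for general $f$ Bartle integrability can only hold under some integrability of $\|f\|$. I would handle it by truncation: set $f^{(m)}=f1_{\{\|f\|\le m\}}$, which is bounded, separably valued and measurable. These are integrable by the bounded case, and I would apply Theorem~\ref{vitali-theorem} with $|\int_E f^{(m)}d\mu|\le\int_E\|f\|\,d|\mu|$. This requires $\int\|f\|\,d|\mu|<\infty$, so I would expect to read the statement as concerning such $f$, or essentially bounded $f$, which is the case needed later. Finally, one must check that the sesquilinear form $(h,k)\mapsto\int f\,d\cli_{h,k}$ is bounded, with bound $\|f\|_\infty\|\cli(X,1_\cla)\|$ via $|\cli_{h,k}|(X)\le\|h\|\|k\|\|\cli(X,1_\cla)\|$. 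It is also sesquilinear, being a limit of the sesquilinear forms of simple integrals. These two facts make $\int_X f\,d\cli$ a well-defined element of $\bh$.
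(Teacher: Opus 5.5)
Your argument is correct and follows the route the paper only gestures at. The paper simply asserts the result from the definition of $\cli$-integrability and Theorem~\ref{petti-type theorem}, with bounded variation of $\cli_{h,k}$ giving Bartle's $*$-property. Your uniformly bounded simple approximants together with Theorem~\ref{vitali-theorem} supply the omitted details. Your simple-function estimate holds once $\|\cli_{h,k}\|(E)$ is read as Bartle's semivariation $\sup\bigl|\sum_i\cli_{h,k}(E_i)(x_i)\bigr|$, taken over partitions of $E$ and $\|x_i\|\le 1$; here this equals $|\cli_{h,k}|(E)$. You are also right that the first clause fails without boundedness or a condition like $\int\|f\|\,d|\cli_{h,k}|<\infty$: take $\cla=\clh=\bbc$, Lebesgue measure on $(0,1]$ and $f(x)=1/x$. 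The paper only uses the bounded case later.
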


As a direct application of the previous theorem, we have the following corollary.
\begin{crlre}\label{cor : I-integrability of continuous functions}
	Let $X$ be a compact, Hausdorff space with $\ox$ be the Borel $\sigma$-algebra of $X$. Let $\cli:\ox\to CP(\cla,\bh)$ be a CP instrument then every $\cla$-valued continuous function on $X$ is $\cli$-integrable.
\end{crlre}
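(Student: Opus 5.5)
The corollary follows from Theorem~\ref{separably valued bounded measurable functions integrability}. It suffices to check that a continuous $f:X\to\cla$ is bounded, $\cli$-essentially separably valued, and strongly $\cli$-measurable.

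\emph{Boundedness and separability.} Since $X$ is compact and $f$ is continuous, $f(X)$ is a compact subset of the metric space $\cla$. Hence $f(X)$ is norm-bounded, so $\|f\|_\infty<\infty$. Every compact metric space is totally bounded and therefore separable. So for every $h,k\in\clh$ we may take $A_{h,k}=\emptyset$ in condition (1) of Theorem~\ref{petti-type theorem}.

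\emph{Measurability.} For each $a^*\in\cla^*$, the scalar function $a^*\circ f$ is continuous, hence Borel measurable with respect to $\ox$. It is also bounded, so it is a uniform limit of scalar simple Borel functions. This gives weak $\cli$-measurability.

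The main point to handle carefully is the convention that ``strong measurability'' for a scalar function means approximation by simple functions. A more direct route avoids this. Fix $n$ and cover the compact set $f(X)$ by finitely many open balls $B(b_1,1/n),\dots,B(b_m,1/n)$. Set
\[
E_j=f^{-1}\bigl(B(b_j,1/n)\bigr)\setminus\bigcup_{i<j}f^{-1}\bigl(B(b_i,1/n)\bigr),
\]
which are Borel sets since $f$ is continuous, and define $f_n=\sum_j b_j1_{E_j}$. Then $\|f_n-f\|_\infty\le 1/n$. Hence $f$ is a uniform, in particular everywhere, limit of simple functions, and it is strongly $\cli$-measurable for every pair $h,k$ simultaneously.

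\emph{Conclusion.} Theorem~\ref{separably valued bounded measurable functions integrability} now gives $\cli$-integrability. Alternatively, one can verify Bartle's condition (ii) directly for the sequence $\{f_n\}$ above. For $E\in\ox$,
\[
\Bigl|\int_E f_n\,d\cli_{h,k}-\int_E f_m\,d\cli_{h,k}\Bigr|\le \|f_n-f_m\|_\infty\,\|\cli_{h,k}\|(X),
\]
and $\|\cli_{h,k}\|(X)\le|\cli_{h,k}|(X)<\infty$ by Remark~\ref{vector measures associated to instruments}. So the indefinite integrals converge uniformly in $E$, and $f$ is Bartle integrable with respect to each $\cli_{h,k}$.
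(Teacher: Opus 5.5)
Your proof is correct and follows the paper's route. The paper derives this corollary as a direct application of Theorem~\ref{separably valued bounded measurable functions integrability}, relying on exactly the facts you verify: a continuous $f$ on compact $X$ is bounded, Borel measurable, and has compact (hence separable) range. Your explicit uniform approximation of $f$ by Borel simple functions, and your direct check of Bartle's condition via $\bigl|\int_E g\,d\cli_{h,k}\bigr|\le\|g\|_\infty\,|\cli_{h,k}|(X)$ for simple $g$, supply details the paper leaves implicit.
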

In the following subsections, we present several important applications of the integration theory developed above to the classical and $C^*$-convex structure of CP instruments.

\subsection{Application 1: Lyapunov-type theorem}\label{Application 1: Lyapunov-type theorem}

In this subsection, we establish a Lyapunov-type theorem for quantum instruments using the integration theory developed in the previous section. Lyapunov's classical theorem \cite{liapounoff1940fonctions} asserts that if $\{\mu_1,\ldots,\mu_n\}$ are finite positive non-atomic measures, then the set
\[
\{(\mu_1(A),\ldots,\mu_n(A)):A\in\ox\}\subseteq\mathbb{R}^n
\]
is compact and convex. This celebrated theorem has inspired extensive developments in the theory of vector measures and has found numerous applications in convex analysis, optimization, and related areas.

Plosker and Ramsey \cite{Plosker_Ramsey} revisited this problem in the setting of positive operator-valued measures (POVMs). Their proof establishes the convexity of the range for non-atomic POVMs under an additional structural assumption on the POVM. In a subsequent remark, however, they exhibited an explicit example that does not satisfy this assumption and asked whether the convexity of the range nevertheless persists. On the other hand, an important phenomenon underlying Lyapunov-type theorems, both in the classical and operator-valued settings, is the non-injectivity of the associated integration map \ref{integration representation}, a point that will become apparent in our approach.

We characterize those quantum instruments for which the Lyapunov property holds. Our proof requires no assumptions beyond the instrument's non-atomicity, thereby showing that the additional hypothesis imposed in the POVM setting is unnecessary and justifying Example 3.4 in \cite{Plosker_Ramsey}. 
To begin with, we fix the notation for the range of an instrument, which will play a central role in the results that follow.

\begin{dfn}
Let $\cli:\ox\to CP(\cla,\bh)$ be an instrument. The \emph{range} of $\cli$, denoted by $\mathcal{R}_\cli$, is the set
\[
\mathcal{R}_\cli=\{\cli(A):A\in\ox\}\subseteq CP(\cla,\bh).
\]
\end{dfn}

 Throughout, we shall work exclusively with non-atomic instruments. Indeed, for an atomic instrument—for example, one defined on a finite measurable space—the range $\mathcal{R}_\cli$ consists of finitely many completely positive maps and is therefore a finite discrete subset of $CP(\cla,\bh)$. Consequently, one cannot, in general, expect a Lyapunov-type convexity theorem to hold in the atomic setting.

Unless stated otherwise, the Hilbert space $\clh$ is assumed to be separable. In view of Remark~\ref{finite measure realization for faithful states}, we fix a faithful density operator $\rho\in\bh$ and denote the associated finite scalar measure $|\cli_\rho|$ by $\nu_\cli$. Throughout, let
$\mathcal{C}:=\{1_A:A\in\ox\}$
denote the collection of characteristic functions of measurable subsets of $X$, and write
$
\operatorname{co}(\mathcal{C})$
for the convex hull of $\mathcal{C}$.

We next recall the notions of atomic and non-atomic instruments introduced in
\cite{bhat_chongdar_sruthymurali}. Let
$\cli:\ox\to CP(\cla,\bh)$
be an instrument. A measurable set $A\in\ox$ is called an
\emph{atom} of $\cli$ if $\cli(A)\neq0$ and, for every measurable subset
$B\subseteq A$, either
$
\cli(B)=0
~~\text{or}~~
\cli(B)=\cli(A).$
An instrument is said to be \emph{atomic} if every measurable set
$A\in\ox$ satisfying $\cli(A)\neq0$ contains an atom, whereas it is
called \emph{non-atomic} if it admits no atoms. We emphasize that
\emph{non-atomic} is stronger than merely being \emph{non-atomic} in the
sense of ``not atomic''; the latter only means that the instrument fails
to be atomic, while the former requires the complete absence of atoms.

The atomic structure of an instrument is completely determined by its
associated POVM. More precisely, an instrument is atomic if and only if
its associated POVM is atomic. Consequently, every instrument admits a
unique decomposition as the sum of an atomic instrument and a non-atomic
instrument \cite{bhat_chongdar_sruthymurali}.

A basic example of a non-atomic instrument is obtained as follows.
Let $X=[0,1]$ be equipped with its Borel $\sigma$-algebra $\ox$, let
$p:\ox\to[0,1]$ denote the Lebesgue probability measure, and let
$\xi:\cla\to\mathbb C$ be a state. Define
$\widetilde{\xi}:\cla\to\bh$ by
$
\widetilde{\xi}(a)=\xi(a)I_{\clh},
~ a\in\cla.$
Then
$
\cli(A)=p(A)\widetilde{\xi},
~ A\in\ox,$
defines a non-atomic instrument.

Before going into the main results let us look for some preparotory results.

\begin{thm}\label{integration representation}
Let $\cli:\ox\to CP(\cla,\bh)$ be an instrument, and let $\nu_\cli$ be the finite measure associated with $\cli$ as mentioned at the beginning of this subsection. Then the map
\[
\psi_\cli:L^\infty(\nu_\cli)\longrightarrow \clb(\cla,\bh),
\qquad
\psi_\cli(f)(a)=\int_X a\,f\,d\cli,
\]
is a well-defined linear map which is weak$^*$--BW continuous. Furthermore, if
$0\leq f\leq1,$
then $\psi_\cli(f)$ is completely positive.
\end{thm}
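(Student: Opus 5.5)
We plan to reduce everything to scalar integrals against the complex measures $\cli_{a,h,k}$ of Remark~\ref{complex measures associated to an instrument}, using Remark~\ref{relation to complex integration}: for $f\in L^\infty(\nu_\cli)$ and $a\in\cla$ we intend to show
\[
\langle h,\psi_\cli(f)(a)k\rangle=\int_X f\,d\cli_{a,h,k},\qquad h,k\in\clh .
\]
\emph{Well-definedness.} First we check that $af$ is $\cli$-integrable. It is separably valued (its range lies in $\bbc a$), and it is measurable because $f$ is measurable. Hence Theorem~\ref{separably valued bounded measurable functions integrability} applies. Next we show that the integral depends only on the $\nu_\cli$-class of $f$. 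By Remark~\ref{finite measure realization for faithful states}, $\cli\ll\nu_\cli$, so each $\cli_{a,h,k}\ll\nu_\cli$. Therefore $f=g$ $\nu_\cli$-a.e.\ forces the scalar integrals to agree. Linearity in $f$ and in $a$ follows from linearity of the scalar integral.

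\emph{Boundedness.} We will use the estimate $|\int f\,d\cli_{a,h,k}|\le\|f\|_\infty|\cli_{a,h,k}|(X)$. To control the total variation, we bound $|\cli_{a,h,k}|(X)$ by $\|a\|\,|\cli_{h,k}|(X)\le\|a\|\|h\|\|k\|\|\cli(X,1_\cla)\|$, using Remark~\ref{vector measures associated to instruments}. This gives $\|\psi_\cli(f)\|\le\|f\|_\infty\|\cli(X)\|$, so $\psi_\cli(f)\in\clb(\cla,\bh)$.

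\emph{Complete positivity for $0\le f\le 1$.} Take a finite family $a_i\in\cla$, $h_i\in\clh$. We need
\[
\sum_{i,j}\int f\,d\cli_{a_i^*a_j,h_i,h_j}\ge0 .
\]
The set function $A\mapsto\sum_{i,j}\cli_{a_i^*a_j,h_i,h_j}(A)$ is a positive measure, because each $\cli(A)$ is CP. Integrating a nonnegative $f$ against a positive measure gives a nonnegative number, which settles positivity. The bound $f\le1$ is not needed for this step; it only ensures $\psi_\cli(f)\le\psi_\cli(1)=\cli(X)$. As an alternative route, one can approximate $f$ uniformly by simple functions $\sum_i c_i1_{A_i}$ with $c_i\in[0,1]$. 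Each $\sum_i c_i\cli(A_i)$ is then CP, and CP maps are closed under pointwise weak limits.

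\emph{Weak$^*$--BW continuity.} This is the step I expect to be the main obstacle. On bounded sets, the BW topology coincides with the point-weak$^*$ topology, i.e.\ convergence of $\langle h,\phi_\lambda(a)k\rangle$, or of $\operatorname{Tr}(\phi_\lambda(a)\tau)$ for trace-class $\tau$. Using the Krein--Smulian theorem, we reduce to showing continuity on bounded nets $f_\lambda\to f$ weak$^*$ in $L^\infty(\nu_\cli)$. The key observation is Radon--Nikodym: since $\cli_{a,h,k}\ll\nu_\cli$, there is a density $g_{a,h,k}\in L^1(\nu_\cli)$ with $\int f\,d\cli_{a,h,k}=\int f g_{a,h,k}\,d\nu_\cli$. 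Weak$^*$ convergence of $f_\lambda$ therefore yields convergence of $\langle h,\psi_\cli(f_\lambda)(a)k\rangle$. For a trace-class $\tau$, we instead use the $\cla^*$-valued measure $\cli_\tau$: the scalar measure $A\mapsto\cli_\tau(A)(a)$ is again $\ll\nu_\cli$ with an $L^1$ density. Combined with the uniform bound $\|\psi_\cli(f_\lambda)\|\le\sup_\lambda\|f_\lambda\|_\infty\|\cli(X)\|$, this gives BW convergence on bounded sets, and Krein--Smulian upgrades it to full weak$^*$--BW continuity of the linear map $\psi_\cli$.
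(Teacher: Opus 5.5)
Your proof is correct and shares the paper's skeleton: well-definedness comes from Theorem~\ref{separably valued bounded measurable functions integrability} applied to $a\cdot f$, and continuity comes from the Radon--Nikodym densities of the measures $\cli_{a,h,k}\ll\nu_\cli$. It differs from the paper in two steps.

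\emph{Complete positivity.} The paper uses what you list as your alternative route: approximate $f$ uniformly by nonnegative simple functions, whose integrals are positive combinations of the maps $\cli(E_i)$, and then pass to the limit. Your main argument is that $A\mapsto\sum_{i,j}\cli_{a_i^*a_j,h_i,h_j}(A)$ is a finite positive measure, absolutely continuous with respect to $\nu_\cli$. This is shorter, needs no limiting procedure, and gives complete positivity of $\psi_\cli(f)$ for every nonnegative $f$ at once.

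\emph{Continuity.} The paper only checks that $\langle k,\psi_\cli(f_\lambda)(a)h\rangle\to\langle k,\psi_\cli(f)(a)h\rangle$ along weak$^*$-convergent nets. On its own this is point-weak-operator convergence, which agrees with BW convergence only on bounded sets. That is enough for the later uses, Corollary~\ref{compactness of the range} and the fibre argument in Theorem~\ref{rangecharacterization}, since these only involve the unit ball. Your additional ingredients are the norm bound $\|\psi_\cli(f)\|\le\|f\|_\infty\|\cli(X)(1_\cla)\|$, trace-class functionals, and Krein--Smulian. They are what is needed for continuity into the BW topology proper, namely the weak$^*$ topology of $\clb(\cla,\bh)\cong(\cla\hat{\otimes}\mathcal{T}(\clh))^*$, on all of $L^\infty(\nu_\cli)$. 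You also check that the integral does not depend on the representative of $f$ and that $\psi_\cli(f)$ is bounded; the paper leaves both implicit.

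If you want to bypass Krein--Smulian, take $z=\sum_n a_n\otimes\tau_n$ with $\sum_n\|a_n\|\|\tau_n\|_1<\infty$. The scalar measures $\cli_{\tau_n}(\cdot)(a_n)$ then sum in total variation to a complex measure that is absolutely continuous with respect to $\nu_\cli$. Hence $\langle\psi_\cli(f),z\rangle=\int_X f\,g_z\,d\nu_\cli$ for some $g_z\in L^1(\nu_\cli)$, which gives weak$^*$--BW continuity for arbitrary nets directly.
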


\begin{proof}
Let $f\in L^\infty(\nu_\cli)$ and $a\in\cla$. Since the function
\[
a\cdot f:X\longrightarrow\cla,\qquad
x\longmapsto f(x)a,
\]
is bounded, measurable and separably valued, Theorem~\ref{separably valued bounded measurable functions integrability} ensures that $
\int_X a\,f\,d\cli
$
is well defined. Thus $\psi_\cli(f)$ is a well-defined map from $\cla$ into $\bh$. The linearity of $\psi_\cli$ follows immediately from the linearity of the integral.

To prove weak$^*$--BW continuity, let $\{f_\lambda\}$ be a net in
$L^\infty(\nu_\cli)$ converging to $f$ in the weak$^*$ topology. We must show that
\[
\psi_\cli(f_\lambda)\longrightarrow\psi_\cli(f)
\]
in the BW topology of $\clb(\cla,\bh)$. Fix $a\in\cla$ and
$h,k\in\clh$. By Remark~\ref{relation to complex integration},
$
\langle k,\psi_\cli(f_\lambda)(a)h\rangle
=
\int_X f_\lambda\,d\cli_{a,h,k},
$
and similarly,
$
\langle k,\psi_\cli(f)(a)h\rangle
=
\int_X f\,d\cli_{a,h,k}.
$
Since $\cli_{a,h,k}\ll\nu_\cli$, the measure
$\cli_{a,h,k}$ admits a Radon--Nikodým derivative with respect to
$\nu_\cli$. Therefore,
\[
\int_X f_\lambda\,d\cli_{a,h,k}
\longrightarrow
\int_X f\,d\cli_{a,h,k},
\]
by the definition of weak$^*$ convergence on $L^\infty(\nu_\cli)$. Hence,
\[
\langle k,\psi_\cli(f_\lambda)(a)h\rangle
\longrightarrow
\langle k,\psi_\cli(f)(a)h\rangle,
\]
which proves that $\psi_\cli$ is weak$^*$--BW continuous.

Finally, suppose that $0\leq f\leq1$. Choose a sequence of positive simple functions
$\{g_n\}$ such that
$
g_n\longrightarrow f$
uniformly on $X$. Since
\[
\psi_\cli(g_n)
=
\sum_{i=1}^{m_n}\alpha_i^{(n)}
\cli(E_i^{(n)},\cdot),
\]
where $\alpha_i^{(n)}\ge0$, each $\psi_\cli(g_n)$ is completely positive, being a finite positive linear combination of completely positive maps.

Now let $\{a_1,\dots,a_m\}\subseteq\cla$ and
$\{h_1,\dots,h_m\}\subseteq\clh$. Then
\[
\sum_{i,j=1}^m
\left\langle
h_i,
\psi_\cli(f)(a_i^*a_j)
h_j
\right\rangle
=
\lim_{n\to\infty}
\sum_{i,j=1}^m
\left\langle
h_i,
\psi_\cli(g_n)(a_i^*a_j)
h_j
\right\rangle.
\]
Each term on the right-hand side is non-negative since
$\psi_\cli(g_n)$ is completely positive. Passing to the limit yields
\[
\sum_{i,j=1}^m
\left\langle
h_i,
\psi_\cli(f)(a_i^*a_j)
h_j
\right\rangle
\ge0,
\]
proving that $\psi_\cli(f)$ is completely positive.
\end{proof}

\begin{crlre}\label{compactness of the range}
Since the closed unit ball of $L^\infty(\nu_\cli)$ is weak$^*$-compact by the Banach--Alaoglu theorem and $\psi_\cli$ is weak$^*$--BW continuous, it follows that the image of the unit ball under $\psi_\cli$ is BW-compact in $\clb(\cla,\bh)$.
\end{crlre}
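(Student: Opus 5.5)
The corollary follows from two standard facts: a continuous image of a compact set is compact, and Banach--Alaoglu applies to $L^\infty(\nu_\cli)$ as a dual space. The plan is to verify each ingredient and then combine them.

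First, identify $L^\infty(\nu_\cli)$ as the dual of $L^1(\nu_\cli)$. This is legitimate because $\nu_\cli=|\cli_\rho|$ is a finite positive measure (Proposition~\ref{measurecorrespondence} and Remark~\ref{finite measure realization for faithful states}), hence $\sigma$-finite. The weak$^*$ topology in Theorem~\ref{integration representation} is then $\sigma(L^\infty,L^1)$. Banach--Alaoglu gives that the closed unit ball $\mathcal{U}=\{f:\|f\|_\infty\le 1\}$ is weak$^*$-compact.

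Second, fix the meaning of the BW topology on $\clb(\cla,\bh)$. Following its use in the proof of Theorem~\ref{integration representation}, I take it to be the topology of pointwise weak-operator convergence, generated by the seminorms $\phi\mapsto|\langle k,\phi(a)h\rangle|$ for $a\in\cla$ and $h,k\in\clh$. This topology is Hausdorff, since these functionals separate points of $\clb(\cla,\bh)$. Theorem~\ref{integration representation} shows that $\psi_\cli$ is continuous from the weak$^*$ topology to this topology: nets are mapped to nets converging coordinatewise.

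Then $\psi_\cli(\mathcal{U})$ is the continuous image of a compact space, so it is compact in the BW topology. Because the topology is Hausdorff, $\psi_\cli(\mathcal{U})$ is also BW-closed.

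The one point that needs care is that Theorem~\ref{integration representation} was stated with nets, while compactness transfers under topological continuity. Net-continuity at every point is exactly topological continuity, so no sequential argument is needed; this matters because the weak$^*$ unit ball need not be metrizable when $L^1(\nu_\cli)$ is non-separable.

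As a useful refinement for later, the order interval $\{0\le f\le 1\}$ is a weak$^*$-closed subset of $\mathcal{U}$. To see this, write it as the intersection over $g\ge 0$ in $L^1(\nu_\cli)$ of the closed conditions $0\le\int fg\,d\nu_\cli\le\int g\,d\nu_\cli$. It is therefore weak$^*$-compact, so its image $\psi_\cli(\{0\le f\le 1\})$ is a BW-compact set of completely positive maps. This image contains $\psi_\cli(1_A)=\cli(A)$ for every $A\in\ox$, so it contains $\mathcal{R}_\cli$.
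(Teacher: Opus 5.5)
Your proposal is correct and is essentially the paper's own argument: Banach--Alaoglu for $L^\infty(\nu_\cli)=L^1(\nu_\cli)^*$, combined with the weak$^*$--BW continuity of $\psi_\cli$ from Theorem~\ref{integration representation}, makes the image of the compact unit ball BW-compact. Your reading of BW convergence as pointwise weak-operator convergence is exactly what that theorem verifies, and $\psi_\cli$ sends the unit ball to a norm-bounded set, where this agrees with the usual BW topology, so nothing is lost; your remarks on Hausdorffness and on the order interval $\{0\le f\le 1\}$ are correct additions.
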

\begin{rmrk}
In finite dimensions, the map $\psi_\cli$ is automatically non-injective for every non-atomic instrument. Consequently, every non-atomic finite-dimensional instrument has a BW-compact convex range as we will see in the following Theorem \ref{rangecharacterization}.
This phenomenon, however, fails in infinite dimensions, as the following standard example illustrates (\cite{uhl_vectormeasures}).
\end{rmrk}

\begin{xmpl}\label{injective example}
Since every POVM is, in particular, an instrument, it suffices to consider a POVM. Let
$
\nu:\mathcal{O}([0,1])\longrightarrow\mathcal{B}(L^2([0,1]))
$
be the projection-valued measure defined by
$
\nu(A)=M_{1_A},
$
where $M_{1_A}$ denotes the multiplication operator by the characteristic function $1_A$, and $\mathcal{O}([0,1])$ denotes the Borel $\sigma$-algebra on $[0,1]$. Since the Lebesgue measure is non-atomic, it follows immediately that $\nu$ is a non-atomic POVM.
If $\rho$ is any faithful density operator on $L^2([0,1])$, then the associated scalar measure
$
\nu_\rho(A)=\operatorname{Tr}(\rho\,M_{1_A})
$
is equivalent to the Lebesgue measure.
Consider the integration map
\[
\psi_\nu:L^\infty([0,1],m)\longrightarrow\mathcal{B}(L^2([0,1])),
\qquad
f\longmapsto\int f\,d\nu.
\]
Note that,
\[
\int f\,d\nu=M_f,
\]
where $M_f$ denotes the multiplication operator induced by $f$. Consequently, if $\psi_\nu(f)=0$, then $M_f=0$, which implies that $f=0$ almost everywhere with respect to the Lebesgue measure. Hence $\psi_\nu$ is injective.

\end{xmpl}

 We are now ready to prove the main theorem of this subsection.

\begin{thm}[Lyapunov-type theorem]\label{rangecharacterization}
Let $\cli:\ox\to CP(\cla,\bh)$ be a non-atomic instrument. Then the following are equivalent.

\begin{enumerate}
    \item The range $\mathcal{R}_\cli$ is convex.
    \item For every measurable set $E\in\ox$ with $\nu_\cli(E)\neq0$, the restricted integration map
    \[
    \psi_\cli^E:L^\infty(E,\nu_\cli|_E)\longrightarrow\clb(\cla,\bh),
    \qquad
    f\longmapsto\int_E f\,d\cli,
    \]
    is not injective.
\end{enumerate}
\end{thm}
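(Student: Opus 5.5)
The plan is to follow Lindenstrauss' proof of Lyapunov's theorem, with Theorem~\ref{integration representation} and Corollary~\ref{compactness of the range} doing the analytic work. The basic observation is $\psi_\cli(1_A)=\cli(A)$ for every $A\in\ox$, so that $\mathcal{R}_\cli=\psi_\cli(\mathcal{C})$.

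\emph{$(2)\Rightarrow(1)$.} Fix $A,B\in\ox$ and $t\in[0,1]$, and put $g=t1_A+(1-t)1_B$. Set
\[
K=\{f\in L^\infty(\nu_\cli):0\le f\le 1,\ \psi_\cli(f)=\psi_\cli(g)\}.
\]
Then $K$ is nonempty, since $g\in K$, and it is convex. The order interval $\{0\le f\le 1\}$ is weak$^*$-closed in the unit ball, so it is weak$^*$-compact. The map $\psi_\cli$ is weak$^*$--BW continuous and the BW topology is Hausdorff, so the fibre condition is weak$^*$-closed. Hence $K$ is weak$^*$-compact, and by Krein--Milman it has an extreme point $f$.

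I claim $f$ is a characteristic function. Suppose not. Then for some $\varepsilon>0$ the set $E=\{\varepsilon\le f\le 1-\varepsilon\}$ has $\nu_\cli(E)>0$. By (2) there is a nonzero $h\in L^\infty(E,\nu_\cli|_E)$ with $\int_E h\,d\cli=0$.

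This $h$ can be taken real-valued. Since $\psi_\cli(\bar h)(a)=\psi_\cli(h)(a^*)^*$, both the real and imaginary parts of $h$ lie in the kernel, and at least one of them is nonzero. After rescaling we may assume $\|h\|_\infty\le\varepsilon$, and we extend $h$ by $0$ off $E$. Then $f\pm h\in K$, which contradicts extremality. Thus $f=1_C$ a.e. for some $C\in\ox$, and
\[
\cli(C)=\psi_\cli(1_C)=\psi_\cli(g)=t\,\cli(A)+(1-t)\,\cli(B),
\]
so $\mathcal{R}_\cli$ is convex. Non-atomicity is not even needed in this direction; it is implicitly forced by (2).

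\emph{$(1)\Rightarrow(2)$.} Argue by contraposition: assume $\psi_\cli^E$ is injective for some $E$ with $\nu_\cli(E)>0$. The natural midpoint to test is $\tfrac12\cli(E)$, the midpoint of $\cli(E)$ and $\cli(\emptyset)=0$. If some $B\subseteq E$ satisfied $\cli(B)=\tfrac12\cli(E)$, then $\psi_\cli^E(1_B)=\psi_\cli^E(\tfrac12 1_E)$. Injectivity would give $1_B=\tfrac12 1_E$ $\nu_\cli$-a.e. on $E$, which is impossible because $\nu_\cli(E)>0$. So $\tfrac12\cli(E)$ is not in the range of the restricted instrument $\cli_E(\cdot)=\cli(\cdot\cap E)$.

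\textbf{Main obstacle.} Convexity of $\mathcal{R}_\cli$ only yields some $B\in\ox$, not necessarily contained in $E$, with $\cli(B)=\tfrac12\cli(E)$. One needs to localize to $E$. My first attempt would be positivity: evaluating at $1_\cla$ and pairing with the faithful state $\rho$ gives
\[
\nu_\cli(B\cap E)+\nu_\cli(B\setminus E)=\tfrac12\,\nu_\cli(E).
\]
One would then try to choose the test midpoint more cleverly, for instance between $\cli(E)$ and $\cli(E')$ for suitable $E'\subseteq X\setminus E$, or pass to a subset $E_0\subseteq E$ on which injectivity persists, so that the part of $B$ outside $E$ is forced to be null.

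If this localization cannot be achieved directly, I would prove the equivalence with (1) read hereditarily, that is, as convexity of $\mathcal{R}_{\cli_E}$ for every $E$. The direction $(2)\Rightarrow(1)$ above applies verbatim to each $\cli_E$, since (2) is itself a hereditary condition. I would then check whether the paper's intended formulation is this hereditary one.
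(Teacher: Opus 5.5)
\textbf{The direction $(2)\Rightarrow(1)$.} This is the paper's argument: take an extreme point of a weak$^*$-compact fibre of $\psi_\cli$ over $\{0\le f\le 1\}$, and show it is a characteristic function by perturbing it with a kernel element. Your reduction to a real-valued $h$, via $\psi_\cli(\bar h)(a)=\psi_\cli(h)(a^*)^*$, repairs a step the paper skips. The paper perturbs by $g\pm\alpha h$ and asserts $0\le g_\pm\le 1$, although its $h$ may be complex.

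\textbf{The direction $(1)\Rightarrow(2)$.} The localization problem you flag is genuine, and it is exactly where the paper's proof is incomplete. The paper gets non-injectivity of $\psi_\cli$ from $\psi_\cli(\mathcal{C})=\psi_\cli(\operatorname{co}(\mathcal{C}))$ with $\mathcal{C}\subsetneq\operatorname{co}(\mathcal{C})$. It then says ``the same argument applies to every measurable subset $E$''. That step tacitly uses convexity of $\mathcal{R}_{\cli_E}$ for $\cli_E=\cli(\cdot\cap E)$, which does not follow from (1). No cleverer choice of midpoint and no shrinking of $E$ can close the gap, because $(1)\Rightarrow(2)$ is false as stated.

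Here is a counterexample. Take $\clh=L^2([0,1])$, $\lambda$ Lebesgue measure, and $X=[0,1]\sqcup[0,1]^2$. For a Borel set $C=A\sqcup B$ put
\[
\cli(C)=\tfrac12\,M_{1_A+g_B},\qquad g_B(x)=\lambda(\{y:(x,y)\in B\}).
\]
\begin{itemize}
\item This is a unital POVM, hence an instrument with $\cla=\bbc$.
\item It is non-atomic, because $\cli(C)=0$ exactly when $A$ and $B$ are Lebesgue-null, and Lebesgue measure is non-atomic.
\item Its range is convex. Given measurable $0\le v\le 1$, set $u=2v$, $A=\{u>1\}$ and $B=\{(x,y):y<u(x)-1_A(x)\}$. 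Then $1_A+g_B=u$, so $\mathcal{R}_\cli=\{M_v:0\le v\le 1\}$.
\item Condition (2) fails at $E=[0,1]$. There $\nu_\cli(E)=\tfrac12$, $\nu_\cli|_E$ is equivalent to $\lambda$, and $\psi_\cli^E(f)=\tfrac12 M_f$ is injective.
\end{itemize}
Your test midpoint $\tfrac12\cli(E)=\tfrac14 I$ is attained by $B=[0,1]\times[0,\tfrac12)$, which lies entirely outside $E$. So the part of $B$ outside $E$ genuinely cannot be forced to be null.

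\textbf{The correct statement.} Your fallback is the right theorem: (2) is equivalent to convexity of $\mathcal{R}_{\cli_E}$ for every $E\in\ox$. Condition (2) passes to each $\cli_E$, so your $(2)\Rightarrow(1)$ argument gives this hereditary convexity. Conversely, your midpoint argument finishes the proof: a $B\subseteq E$ with $\cli(B)=\tfrac12\cli(E)$ makes $1_B-\tfrac12 1_E$ a nonzero element of $\ker\psi_\cli^E$.
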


\begin{proof}
We first prove $(2)\Rightarrow(1)$.

Let $(\clb_+)_1=\{f\in L^\infty(X,\nu_\cli):0\leq f\leq1\}.$  Therefore, it suffices to prove that $
\mathcal{R}_\cli=\psi_\cli((\clb_+)_1).$

The inclusion $
\mathcal{R}_\cli\subseteq\psi_\cli((\clb_+)_1)$
is immediate, since $
\psi_\cli(1_A)=\cli(A),
~ A\in\ox.$
For the converse inclusion, fix
$
\phi\in\psi_\cli((\clb_+)_1),
$
and consider the fiber
$
F_\phi
=
\{f\in(\clb_+)_1:\psi_\cli(f)=\phi\}.$
Since $\psi_\cli$ is weak$^*$--BW continuous (Theorem \ref{integration representation}), $F_\phi$ is a weak$^*$-closed subset of the weak$^*$-compact convex set $(\clb_+)_1$. Hence $F_\phi$ is weak$^*$-compact and convex. By the Krein--Milman theorem, $F_\phi$ admits an extreme point, say $g$.

We claim that $g$ must be the characteristic function of a measurable subset of $X$. Suppose otherwise. Then there exist $\alpha\in(0,1)$ and a measurable set $A_\alpha\in\ox$ with
$
\nu_\cli(A_\alpha)\neq0
$
such that
$
\alpha 1_{A_\alpha}
\leq
g1_{A_\alpha}
\leq
(1-\alpha)1_{A_\alpha}.
$

By assumption, the restricted integration map
\[
\psi_\cli^{A_\alpha}:L^\infty(A_\alpha,\nu_\cli|_{A_\alpha})
\longrightarrow
\clb(\cla,\bh)
\]
is not injective. Therefore, there exists a non-zero function
$
h\in L^\infty(A_\alpha,\nu_\cli|_{A_\alpha})$
such that
$
\psi_\cli^{A_\alpha}(h)=0
$ and $
\|h\|_\infty\leq1.
$

Define, $g_\pm=g\pm\alpha h.$
Since $\|h\|_\infty\leq1$ and 
$\alpha\leq g\leq1-\alpha$
on $A_\alpha$, we obtain $
0\leq g_\pm\leq1.$
Moreover,
\[
\psi_\cli(g_\pm)
=
\psi_\cli(g)\pm\alpha\psi_\cli^{A_\alpha}(h)
=
\phi,
\]
and $
g=\frac12(g_++g_-).$
Since $h\neq0$, we have $g_+\neq g_-$, contradicting the extremality of $g$. Hence every extreme point of $F_\phi$ is a characteristic function. Consequently,
$
\phi=\psi_\cli(1_A)=\cli(A)
$
for some measurable set $A\in\ox$. This proves that
$
\mathcal{R}_\cli=\psi_\cli((\clb_+)_1),
$
and therefore $\mathcal{R}_\cli$ is BW-compact and convex.

We now prove $(1)\Rightarrow(2)$.
Suppose that $\mathcal{R}_\cli$ is convex.

Since
\[
\mathcal{R}_\cli
=
\psi_\cli(\mathcal{C})
=
\psi_\cli(\operatorname{co}(\mathcal{C})),
\]
and
\[
\mathcal{C}
\subsetneq
\operatorname{co}(\mathcal{C}),
\]
the map $\psi_\cli$ is not injective on $\operatorname{co}(\mathcal{C})$. The same argument applies to every measurable subset $E\in\ox$, proving that
\[
\psi_\cli^E:\operatorname{co}(\mathcal{C}_E)\longrightarrow CP(\cla,\bh)
\]
is not injective. This completes the proof.
\end{proof}

 \begin{crlre}\label{range of non-atomic POVM}
    Let $\eta:\ox\to\bh$ be a non-atomic POVM. Then if the integration map $\psi_\eta$ associated to $\eta$ is non-injective then the range of $\eta$ is convex. 
\end{crlre}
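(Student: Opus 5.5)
The plan is to obtain the corollary by specializing Theorem~\ref{rangecharacterization} to POVMs, so that no new convexity argument is needed.

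\emph{Realizing $\eta$ as an instrument.} Take $\cla=\bbc$. A POVM $\eta:\ox\to\bh$ gives the CP instrument $\cli(A)(\lambda)=\lambda\,\eta(A)$. Condition (2) of Definition~\ref{maindefinition} holds because $\cli_{\lambda,h,k}=\lambda\langle h,\eta(\cdot)k\rangle$ is a complex measure. Complete positivity of $\cli(A)$ reduces to positivity of $\eta(A)$, since every positive map with commutative domain $\bbc$ is completely positive.

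\emph{Transferring the hypotheses.} An atom of $\cli$ is exactly an atom of $\eta$, so $\cli$ is non-atomic. The finite measure $\nu_\cli$ is the measure $\nu_\eta(A)=\operatorname{Tr}(\rho\,\eta(A))$ attached to a faithful density operator $\rho$. A function $\lambda f$ is separably valued, so Theorem~\ref{separably valued bounded measurable functions integrability} applies, and Remark~\ref{relation to complex integration} gives $\psi_\cli(f)(\lambda)=\lambda\int f\,d\eta=\lambda\,\psi_\eta(f)$. Hence $\psi_\cli$ and $\psi_\eta$ have the same kernel, and likewise each restriction $\psi_\cli^E$ corresponds to $\psi_\eta^E$.

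\emph{Applying the theorem.} It remains to check condition (2) of Theorem~\ref{rangecharacterization}: for every $E$ with $\nu_\eta(E)\neq0$, the map $\psi_\eta^E$ is non-injective. The implication $(2)\Rightarrow(1)$ then gives that $\mathcal R_\eta$ is convex, in fact BW-compact and convex, equal to $\psi_\eta(\{0\le f\le1\})$. The whole argument therefore reduces to passing from the stated hypothesis, that $\psi_\eta$ itself is non-injective, to non-injectivity of every restriction $\psi_\eta^E$.

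\emph{The main obstacle: global versus restricted non-injectivity.} This passage is the delicate step, and I do not expect it to hold for an arbitrary non-atomic POVM. Let $\eta=\nu\oplus\eta_2$ on $[0,1]\sqcup[2,3]$, where $\nu$ is the injective multiplication PVM of Example~\ref{injective example} and $\eta_2$ is any non-atomic POVM with non-injective integration map. Then $\psi_\eta$ is non-injective, yet $\psi_\eta^{[0,1]}$ is injective.

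The first thing I would try is to localize: take a non-zero $f\in\ker\psi_\eta$ and use the non-atomicity of $\eta$ to move the kernel into a prescribed $E$. The example above shows this cannot succeed for general $E$. So I would state explicitly that the non-injectivity hypothesis is to be read hereditarily, that is, for $\eta$ together with all its restrictions $\eta_E(\cdot)=\eta(\cdot\cap E)$. Each such restriction is again a non-atomic POVM, and this reading is exactly what the proof of Theorem~\ref{rangecharacterization} uses: it only invokes non-injectivity on sets of the form $A_\alpha$.

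Under the literal global reading, the convexity argument does not go through. I would then examine whether convexity of $\mathcal R_\eta$ still holds in the example above before asserting the corollary in that form.
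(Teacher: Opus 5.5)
Your reduction ($\cla=\bbc$, $\cli(A)(\lambda)=\lambda\,\eta(A)$, $\nu_\cli=\nu_\eta$, equal kernels for $\psi_\cli^E$ and $\psi_\eta^E$) is how the paper intends this corollary. The paper states it without proof as the POVM case of Theorem~\ref{rangecharacterization}, $(2)\Rightarrow(1)$. Your diagnosis is also correct. Condition (2) there is hereditary: $\psi^E$ must be non-injective for every $E$ with $\nu(E)\neq0$, and the proof uses it only on the sets $A_\alpha$. Nothing in the paper bridges the gap from global non-injectivity of $\psi_\eta$ to this, so the paper's own implicit derivation has exactly the gap you identify.

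You can be more decisive than ``examine whether convexity still holds'': your example refutes the corollary as literally stated. Take, for instance, $\eta_2(A)=m(A)$ (Lebesgue measure on $[2,3]$, acting on $\bbc$) in $\eta=\nu\oplus\eta_2$. Then $\eta$ is non-atomic, and $\psi_\eta$ kills every non-zero $f$ supported in $[2,3]$ with $\int f\,dm=0$. However, the $L^2([0,1])$-component of every $\eta(B)$ is the projection $M_{1_{B\cap[0,1]}}$, whereas
\[
\tfrac12\,\eta(X)+\tfrac12\,\eta(\emptyset)=\tfrac12 I_{L^2([0,1])}\oplus\tfrac12
\]
has $L^2([0,1])$-component $\tfrac12 I$, which is not a projection. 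Hence $\mathcal R_\eta$ is not convex.

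So the statement is correct only under the hereditary hypothesis, and with that reading your argument is complete. When $\dim\clh<\infty$, the hereditary hypothesis holds automatically and the corollary needs no injectivity assumption at all. Indeed, $\nu_\eta$ inherits non-atomicity from $\eta$, so $L^\infty(E,\nu_\eta|_E)$ is infinite-dimensional whenever $\nu_\eta(E)>0$, while $\bh$ is finite-dimensional.
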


\subsection{Application 2: Correspondence between instruments and CP maps}\label{Application 2: Correspondence between instruments and CP maps}
We now move our attention to the case when $X$ is a topological space and $\clo(X)$ is to be the Borel $\sigma$-algebra on $X$. In this setting, to establish the correspondence between regular instruments and completely positive maps, we first recall the notion of regular vector-valued measures, which naturally generalizes the classical concept of regularity from scalar to operator-valued contexts. Here regularity is crucial to establish the one-one correspondence.
\begin{dfn}[Regular vector measures]
	Let $\clb$ be a Banach space, $X$ be a topological space and $\clo(X)$ be the $\sigma$-algebra of Borel subsets of $X$. A vector measure $\mu$ on $(X,\clo(X))$ with values in $\clb$ is said to be regular if for every Borel set $E\in\clo(X)$ and $\epsilon>0$, there exists an open set $O$ containing $E$ and a compact set $C$ contained in $E$ such that $||\mu(O \setminus C)||<\epsilon$.
\end{dfn}
\begin{rmrk}\label{the set of all regular measures forms a Banach space}
	Recall that, for a compact Hausdorff space $X$, the space $M(X,\clb)$ of all regular $\clb$-valued measures of bounded variation is a Banach space under the total variation norm
\[
\|\mu\|=|\mu|(X).
\]
Theorem~\ref{completeness w.r.t TV metric} shows that an analogous completeness phenomenon holds for the set of completely positive instruments equipped with the total variation metric.
\end{rmrk}	 


Next we introduce regularity for instruments in a similar manner as discussed in \cite{manishcmp} in the context of POVMs.

\begin{dfn}[Regular instruments]\label{Regular instruments}
		Let $X$ be a topological space and $\ox$ be the Borel $\sigma$-algebra of $X$. An instrument $\cli: \ox\to CP(\cla,\bh)$ is said to be a regular instrument if $\cli_{h,k},$ as defined in \ref{measurecorrespondence}, is a regular $\cla^*$ valued measure on $(X,\ox)$ for all $h,k \in \clh$. 
	\end{dfn}

	\begin{notation}
	   We denote by $\mathcal{R}\text{-}Ins_{\mathcal{H}}(X,\cla)$ the collection of all regular CP instruments on $(X,\ox)$ with values in $CP(\cla,\bh)$.  
Similarly, $\mathcal{R}\text{-}I_{\mathcal{H}}(X,\cla)$ denotes the set of all regular UCP instruments on $(X,\ox)$.

	\end{notation}

In the following theorem, we extend the classical correspondence between regular positive operator-valued measures (POVMs) on compact Hausdorff spaces and completely positive (CP) maps on commutative $C^*$-algebras to the broader framework of CP instruments. This result naturally generalizes both the Riesz–Markov theorem (\cite[Theorem III.5.7]{conway_book}) and the CP–POVM correspondence ( \cite[Theorem 4.5]{paulsen_book}) to the instrument setting.

\begin{thm}\label{CP and instrument correspondence}
	Let $X$ be a compact Hausdorff space and $\cla$ be a unital $C^*$-algebra. Then there is a one-to-one correspondence between regular CP instruments $\cli$ on $(X,\ox)$ and CP maps from $C(X,\cla)$ to $\bh$, where $C(X,\cla)$ denotes the $C^*$-algebra of $\cla$-valued continuous functions on $X.$ 
\end{thm}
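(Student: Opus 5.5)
The plan is to construct the two directions of the correspondence explicitly and then check that they are mutually inverse.

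\emph{From instruments to CP maps.} Given a regular CP instrument $\cli$, define $\Phi_\cli:C(X,\cla)\to\bh$ by $\Phi_\cli(f)=\int_X f\,d\cli$. This is well defined by Corollary~\ref{cor : I-integrability of continuous functions}. Linearity is clear, and Theorem~\ref{separably valued bounded measurable functions integrability} gives boundedness: $|\langle h,\Phi_\cli(f)k\rangle|\le\|f\|_\infty\|\cli_{h,k}\|(X)$.

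For complete positivity, approximate each $f\in C(X,\cla)$ uniformly by simple functions $f_n=\sum_i f(x_i)1_{E_i}$ subordinate to finite Borel partitions $\{E_i\}$ with small oscillation; compactness and uniform continuity make this possible. Given $F=[f_{pq}]\in M_m(C(X,\cla))_+$, take a common partition for all entries with sample points $x_i$. Then
\[
\Phi_\cli^{(m)}(F_n)=\sum_i \cli(E_i)^{(m)}\bigl([f_{pq}(x_i)]\bigr).
\]
Each $[f_{pq}(x_i)]$ is positive in $M_m(\cla)$, so this is a sum of positive operators because each $\cli(E_i)$ is CP. Uniform convergence $F_n\to F$ together with the bound above gives WOT convergence, so $\Phi^{(m)}_\cli(F)\ge0$. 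As in the last step of Theorem~\ref{integration representation}, it suffices to check this on finite sums $\sum\langle h_p,\cdot\,h_q\rangle$.

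\emph{From CP maps to instruments.} Given $\Phi\in CP(C(X,\cla),\bh)$, fix $h,k\in\clh$ and consider the functional on $C(X,\cla)$ given by $f\mapsto\langle h,\Phi(f)k\rangle$. By the vector-valued Riesz representation theorem (Singer / Dinculeanu: $C(X,\cla)^*\cong M(X,\cla^*)$, the regular $\cla^*$-valued measures of bounded variation), there is a unique regular $\mu_{h,k}\in M(X,\cla^*)$ with
\[
\langle h,\Phi(f)k\rangle=\int_X f\,d\mu_{h,k},
\]
and $|\mu_{h,k}|(X)\le\|\Phi\|\|h\|\|k\|$. Uniqueness makes $(h,k)\mapsto\mu_{h,k}(E)(a)$ sesquilinear and bounded, so for each Borel $E$ and $a\in\cla$ there is $\cli(E)(a)\in\bh$ with $\langle h,\cli(E)(a)k\rangle=\mu_{h,k}(E)(a)$. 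By Remark~\ref{reformulation of definition of instruments}, $\cli$ satisfies the measure condition, and it is regular by construction.

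\emph{The main obstacle: complete positivity of each $\cli(E)$.} I would argue as follows. For $a=[a_{pq}]\in M_m(\cla)_+$ and a continuous $0\le g\le1$, the matrix $[g\,a_{pq}]$ is positive in $M_m(C(X,\cla))$. Hence
\[
\sum_{p,q}\int g\,d\mu_{h_p,h_q}(a_{pq})\ge 0.
\]
For fixed $a_{pq}$, the map $E\mapsto\sum_{p,q}\mu_{h_p,h_q}(E)(a_{pq})$ is a regular scalar measure that is nonnegative on all such $g$. By the scalar Riesz uniqueness, it is therefore a positive measure, which gives $\sum\langle h_p,\cli(E)(a_{pq})h_q\rangle\ge0$. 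The subtle points are regularity and the passage from continuous $g$ to $1_E$; both are handled by scalar regularity of the combined measure, which follows from regularity of the $\mu_{h,k}$.

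\emph{Mutual inverses.} Integrating continuous functions against $\cli$ returns $\Phi$ by construction. Conversely, starting from $\cli$, the measures produced from $\Phi_\cli$ agree with $\cli_{h,k}$ on $C(X,\cla)$, so they coincide by uniqueness in the Riesz theorem. This is exactly where regularity is needed to get injectivity.
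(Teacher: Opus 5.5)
Your proposal is correct. It has the same overall architecture as the paper: the Bartle integral $\Phi_\cli(f)=\int_X f\,d\cli$ in one direction, the identification $C(X,\cla)^*\cong M(X,\cla^*)$ in the other, and uniqueness of regular representing measures for bijectivity. The differences are in the two positivity checks.

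\emph{Complete positivity of $\Phi_\cli$.} The paper proves positivity of Gram sums $\sum_{i,j}\int g_i^*g_j\,d\cli_{h_i,h_j}$ for simple $g_i$ and passes to the limit through Bartle's Vitali-type theorem. You instead sample at points $x_i$ of a common partition and use $F(x_i)\in M_m(\cla)_+$. This makes positivity of $\Phi_\cli^{(m)}(F_n)$ immediate and needs only the bound $\|f\|_\infty\|\cli_{h,k}\|(X)$. It is a minor, slightly cleaner variant.

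\emph{Complete positivity of $\cli_\Phi(E)$.} This is the substantive difference. The paper extends $\Phi$ to $\tilde\Phi$ on $B(X,\cla)$. It then approximates $1_A$ by uniformly bounded nonnegative continuous $f_{n_k}$ converging a.e.\ for $\nu=\sum_{i,j}|\nu_{h_i,h_j}|$, using $L^1$-density of $C(X)$ and Lusin's theorem. Finally it applies the Vitali theorem and the factorization $f\,a_i^*a_j=(\sqrt{f}\,a_i)^*(\sqrt{f}\,a_j)$.

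You instead look at the single scalar measure $\lambda(E)=\sum_{p,q}\mu_{h_p,h_q}(E)(a_{pq})$. It is regular, since $|\lambda|\le\sum_{p,q}\|a_{pq}\|\,|\mu_{h_p,h_q}|$, and it is nonnegative on $C(X)_+$. Uniqueness in the scalar Riesz theorem then forces $\lambda\ge0$.

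Your route avoids $\tilde\Phi$, Lusin's theorem and the convergence theorem altogether. It also pinpoints exactly where regularity enters; the paper's $L^1$-density step tacitly needs regularity of $\nu$ as well. In exchange, the paper's route produces the positive extension $\tilde\Phi$ of $\Phi$ to bounded measurable functions.

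\emph{A detail for the injectivity step.} Uniqueness in $M(X,\cla^*)$ requires each $\cli_{h,k}$ to be regular in the variation sense, and you should say why this holds. Since $\cli_{h,h}$ is $\cla^*_+$-valued, $\|\cli_{h,h}(\cdot)\|=\cli_{h,h}(\cdot)(1_\cla)$ is its own variation, so the paper's norm-regularity already gives variation-regularity. Polarization then extends this to every $\cli_{h,k}$.
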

\begin{proof}
	
	Let $\cli: \ox \to CP(\cla,\bh)$ be a regular instrument. It follows from Corollary \ref{cor : I-integrability of continuous functions} that for $f \in C(X,\cla)$, the following map $\Phi_{\cli}: C(X,\cla)\to\bh$ defined by,
	$$\Phi_{\cli}=\int f~d\cli,$$
	is well defined.

	We claim that the association 
	$C(X,\cla)\ni f \to \Phi_\cli(f) \in \bh$ defines a CP map such that,
	$$\langle h,\Phi_\cli(f)k\rangle =\int_X f~d\cli_{h,k}\,.$$
	To prove the claim, for any $\{f_1,\cdots f_n\}\subset C(X,\cla)$ and $\{h_1,\cdots, h_n\}\subset \clh$, we need to verify that $\sum_{i,j} \langle h_i,\Phi_\cli(f_i^*f_j)h_j\rangle \geq0$. Observe that,
	\begin{eqnarray*}
		\sum_{i,j} \langle h_i,\Phi_\cli(f_i^*f_j)h_j\rangle &=&\sum _{i,j}\int f_i^*f_j~ d\cli _{h_i,h_j}
	\end{eqnarray*}
	Hence, to prove the claim, it is enough to prove that for any $\{f_1,\cdots f_n\}\subset C(X,\cla)$ and $\{h_1,\cdots, h_n\}\subset \clh$,  
	\begin{eqnarray*} 
		\sum _{i,j}\int f_i^*f_j~ d\cli _{h_i,h_j}\geq 0.
	\end{eqnarray*}
	First consider the indicator functions $g_i=a_i\cdot1_{A_i}\in B(X,\cla)$ for $a_i\in \cla$ and $A_i\in \clo(X)$ where $ B(X,\cla)$ denotes the collection of all bounded $\cla$ - valued measurable functions. Then we have,
	\begin{eqnarray}\nonumber
		\sum_ {i,j}\int g_i^*g_j~d\cli _{h_ih_j}&=&\sum_{i,j}\int a_i^*a_j1_{A_i\cap A_j}~d\cli _{h_ih_j}\\\nonumber
		&=& \sum _{i,j}\langle h_i,\cli(A_i\cap A_j,a_i^*a_j)h_j\rangle\label{correspondence1}\\
		&\geq &0,
	\end{eqnarray}
	where the last inequality follows from a calculation carried out in the proof of Theorem~2.7 of \cite{bhat_chongdar_sruthymurali}. Since $C(X,\cla)$ is the $\cst$-norm closed linear span of the functions $af,~a\in~\cla,~f\in~C(X).$ Hence any $f\in C(X,\cla)$ can be approximated uniformly by a sequence of simple functions in every regular $\cla^*$ measures $\cli_{h_i,h_j},~ h_i,h_j\in \mathcal{H}$. Let $\{g_{i,m}\}$ be a sequence of simple functions converging uniformly to $f_i$.
	Therefore,
	\begin{eqnarray*}
		\sum_{i,j}\int f_i^*f_j~ d\cli_{h_i,h_j} 
		&=& \sum _{i,j}\int (\lim_m g_{i,m}^* g_{j,m})~ d\cli_{h_i,h_j}~~~~\text{(by Theorem \ref{vitali-theorem} )}\\
		&=&	\sum_ {i,j}\lim_m \int (g_{i,m}^* g_{j,m})~ d\cli_{h_i,h_j}\\
		&=& \lim_m \sum _{i,j} \int g_{i,m}^*g_{j,m}~d\cli_{h_i,h_j}\\
		&=& \lim_m \sum _{i,j}\langle h_i, \Phi_\cli (g_{i,m}^*g_{j,m})h_j \rangle\\
		&\geq& 0\quad\quad (\text{By Equation \ref{correspondence1}}), 
	\end{eqnarray*}
	
	proving the claim.

	To establish the other way correspondence, note that the dual $C(X,\cla)^*$ can be identified with $M(X,\cla^*),$ (see \cite{ryan_tensorproduct_book}, page 112), by the following correspondence:
	\begin{eqnarray}\label{correspondence}\nonumber
		M(X,\cla^*)\ni\nu \longmapsto (f \to \int f~d\nu) \in C(X,\cla)^*.
	\end{eqnarray}
	
	Now let $\Phi: C(X,\cla)\to \clb(\clh)$ be a UCP map. For each $h,k \in \clh$, the map $C(X,\cla)\ni f \longmapsto \langle h,\Phi(f)k\rangle$ defines a bounded linear functional on $C(X,\cla)$. Hence it corresponds to a regular $\cla^*$ - valued measure $\nu_{h,k}\in M(X,\cla^*)$ on $(X,\clo(X))$ such that 
	\begin{eqnarray}\label{correspondence3}
		\int f ~d\nu_{h,k}= \langle h,\Phi(f)k\rangle.
	\end{eqnarray}
	
	It is clear from Equation \ref{correspondence3} that $\nu_{h,h}$ is a $\cla^*_+$ - valued measure for all $h \in \clh$. Now for $h,k \in \clh$, the total variation of the $\cla^*$ - valued measure $\nu_{h,k}$ is given by
	$$|\nu_{h,k}|(A)=\sup \{\sum_i\lVert\nu_{h,k}(E_i)\rVert: \{E_1,\cdots,E_n\} \text{ is a partition of $A$}\}.$$
	Since $\nu_{h,k}$ are of bounded variation, we have $|\nu_{h,k}|$ is a finite positive measure on $(X,\clo(X))$. Now for each bounded $\cla$-valued measurable function $g \in B(X,\cla)$, the map $(h,k) \to \int g~d\nu_{h,k}$ defines a bounded sesquilinear form $\clh \times \clh \to \bbc$. Hence by Riesz theorem, we get a unique bounded operator $\tilde\Phi(g) \in \clb(\clh)$ satisfying,
	\begin{eqnarray}\nonumber
		\langle h, \tilde\Phi(g)k \rangle=\int ~g~ d\nu_{h,k}.
	\end{eqnarray}  
	It can be checked that $\tilde{\Phi}(g)\geq 0$  if $g\geq 0$ in $B(X,\cla)$. For $A\in \clo(X), a \in \cla$, define $\cli_\Phi(A,a)=\tilde{\Phi}(a.1_A)$ such that,
	\begin{eqnarray}\label{correspondence2}
		\langle h,\cli_\Phi(A,a)k\rangle=\langle h,\tilde \Phi(a.1_A)k\rangle=\int ~a.1_A~d\nu_{h,k}=\nu_{h,k}(A)(a).
	\end{eqnarray}
	We claim that $\cli_\Phi: \ox \to CP(\cla,\bh)$ is an instrument. Let $a \in \cla$ be fixed. We need to show that $\cli_\Phi(\cdot,a) $ defines an operator valued measure and in particular if $a\in \cla_+$, we need to prove that $\cli_ \Phi(\cdot, a)$ is a POVM. Note that for any countable collection $\{B_i\}\subset \clo(X)$ of disjoint measurable subsets, we have the following:
	
	$$\langle h,\cli_\Phi(\cup_iB_i,a)k\rangle=\langle h,\tilde \Phi(a.1_{\cup_iB_i})k\rangle= \nu_{h,k}(\cup_iB_i)(a)=\sum_i \nu_{h,k}(B_i)(a)=\sum_i\langle h,\cli_\Phi(B_i,a)k\rangle $$
	for all $ h, k \in \clh$ proving that $\cli_\Phi(\cdot,a) $ defines an operator valued measure.  Now since for each $A \in \clo(X)$,  $\nu_{h,h}(A)$ is a positive linear functional on $\cla$, it is immediate from Equation \ref{correspondence2} that $\cli_ \Phi(\cdot, a)$ defines a POVM for all $a\in \cla_+$.
	
	To prove that $\cli_\Phi: \ox \to CP(\cla,\bh)$ is an instrument, we also need to show that $\cli_ \Phi(A,\cdot)$ defines a CP map for every $A \in \clo(X).$ Let $A \in \clo(X)$ be fixed. We need to see that,
	\begin{eqnarray}\label{correspondence5}
		\sum_{i,j} \langle h_i,\cli_\Phi(a_i^* a_j,A)h_j\rangle\geq 0\quad \text{for $\{a_1,\cdots,a_n\}\subset \cla$ and $\{h_1,\cdots,h_j\}\subset \mathcal{H}$}.
	\end{eqnarray}  Since $X$ is compact Hausdorff, we have $C(X)\subset L^\infty(X,\nu)\subset L^1(X,\nu)$ for any finite scalar measure $\nu$ on $X$. Now by the denseness of $C(X)$ in $L^1(X,\nu)$, there exists a sequence of positive continuous functions  $\{f_n\}\subset C(X)$ such that $f_n$ converges to $1_A$ in $L^1$ norm and so there exists a subsequence $f_{n_k}$ converges point-wise $\nu$ almost everywhere to $1_A$. In fact by Lusin's theorem we can take the sequence $\{f_{n_k}\}$ to be uniformly bounded. 
	
	To prove Equation \ref{correspondence5}~, let us fix $\{a_1,\cdots,a_n\}\subset \cla$ and $\{h_1,\cdots,h_j\}\subset \mathcal{H}$. Consider the finite positive measure $\nu=\sum_{i,j}|\nu_{h_i,h_j}|$. Then it is clear that $|\nu_{h_i,h_j}|\ll \nu$ for $i,j=1,\cdots,n$. We have from the above discussion that $f_{n_k}$ converges point-wise $\nu$ almost everywhere to $1_A$. In particular $f_{n_k}$ converges point-wise $|\nu_{h_i,h_j}|$ almost everywhere to $1_A$, for $i,j=1,\cdots,n$. Hence $a_i^*a_j\cdot f_{n_k}$ converges point-wise $\nu_{h_i,h_j}$ almost everywhere to $a_i^*a_j \cdot 1_A$ for $ i,j=1,\cdots,n$. Since $f_{n_k}$'s are uniformly bounded, we can conclude that $a_i^*a_j\cdot f_{n_k}$'s are uniformly bounded and hence by the Theorem \ref{vitali-theorem} we have,
	\begin{eqnarray}\label{correspondence4}
		\lim_k \int a_i^*a_j\cdot f_{n_k}d\nu _{h_i,h_j}=\int a_i^*a_j\cdot 1_A~d\nu _{h_i,h_j},\quad \text{for }  i,j=1,\cdots,n.
	\end{eqnarray}

	Now observe that,
	\begin{eqnarray*}
		\sum_{i,j} \langle h_i,\cli_\Phi(a_i^* a_j,A)h_j\rangle&=& \sum_{i,j}\langle h_i,\tilde{\Phi}((a_i^*a_j)\cdot1_A)h_j \rangle \\
		&=& \sum_{i,j}\int a_i^* a_j\cdot 1_A~ d\nu_{h_i,h_j}\\
		&=&	\sum_ {i,j}\lim_k \int a_i^* a_j\cdot f_{n_k}~ d\nu_{h_i,h_j}\quad\text{(By Equation \ref{correspondence4})}\\
		&=& \lim_k \sum _{i,j} \int a_i^*a_j\cdot f_{n_k}~d\nu_{h_i,h_j}\\
		&=& \lim_k \sum _{i,j}\langle h_i, \Phi ((a_i^*a_j)\cdot f_{n_k})h_j \rangle\\
		&=& \lim_k \sum _{i,j} \langle h_i ,\Phi ((\sqrt{f_{n_k}}a_i)^*(\sqrt{f_{n_k}}a_j)h_j \rangle\\
		&\geq& 0\quad(\text{since $\Phi$ is CP}), 
	\end{eqnarray*}
	finishing the proof of Equation \ref{correspondence5}.
	
	An easy verification shows that:\\
	(i) $\cli_{\Phi_{\cli}}=\cli$;\\
	(ii) $\Phi_{\cli_\Phi}=\Phi.$\\
	concluding the proof of the bijection between the sets $\mathcal{R}\text{-}Ins_{\mathcal{H}}(X,\cla)$ and $CP(C(X,\cla),\bh)$.

\end{proof}
\begin{rmrk}
    To prove the converse direction, one may appeal to dilation theory, thereby avoiding the technical intricacies of integration theory. We outline this alternative approach below. Let $\Phi: C(X,\cla)\to\bh$ be a completely positive map, and consider its minimal Stinespring dilation. Since, $C(X,\cla)\equiv C(X)\otimes\cla,$ $C(X)$ is nuclear, the tensor product is uniquely determined. Thus, we may write $\Phi(f\otimes a)=V^*\pi_{E}(f)\pi(a)V.$ By the CP–POVM correspondence (see \cite[Theorem 4.5]{paulsen_book}), there exists a spectral measure $E:\ox\to\bh$ on the measurable space $(X,\ox).$ Define the map $\cli_{\Phi}:\ox\times\cla\to\bh$ given by $\cli_\Phi(A,a)=V^*E(A) \pi(a)V,~a\in\cla,~A\in\ox.$ It is straightforward to check that $\cli_\Phi$ defines a CP instrument. Moreover, a routine argument shows that $\Phi=\Phi_{\cli_\Phi}.$ 
\end{rmrk}
\begin{rmrk}
    In the special case when our $(X,\ox)$ be countable discrete space, then the set of instruments corresponds to the set of CP maps $CP(\cla, L^\infty(X,\bh))$
\end{rmrk}
\subsection{BW Topology on Instruments}
Let $X$ be a topological space, and let $I_{\clh}(X,\cla)$ denote the set of all unital instruments from $\ox$ to $CP(\cla,\bh)$. In this context, we introduce a topology on $I_{\clh}(X,\cla)$, inspired by the work of Bhat and Kumar~\cite{manishcmp}, which extends the bounded weak (BW) topology previously developed for CP maps. 

Rather than working with the entire class of simple functions from the topological space  $X$ into the $C^*$-algebra $\cla,$ we restrict attention to a distinguished subclass, denoted by
$\cbxa.$ This space consists of all bounded, continuous, separably valued functions on $X$ taking values in $\cla$, with the additional requirement that for every $f \in \cbxa$, admits a sequence of simple functions converging uniformly to $f.$ 

This choice leads to a natural extension of the classical weak topology on the space of probability measures on a topological space, namely the topology induced by bounded continuous functions. Moreover, when $X$ s a locally compact Hausdorff space, the resulting topology coincides with the topology induced by compactly supported continuous functions.
Recall that, for any instrument $\cli$, the corresponding $\cla^*$-valued measure $\cli_{h,k}$ is defined as in Proposition~\ref{measurecorrespondence}.
We now introduce a topology on $I_{\clh}(X,\cla)$ by specifying the convergence of nets.
\begin{dfn}\label{definition of bw topology on instruments}\index{BW (bounded weak) topology! on instruments}
	Let ${\cli^j}$ be a net in $I_{\clh}(X,\cla)$ and $\cli \in I_{\clh}(X,\cla)$.
	We say that $\cli^j \to \cli$ in the {\em bounded weak (BW)\/} topology if
	\begin{align*}
		\int_Xfd\cli^j_{h,k}\to\int_X fd\cli_{h,k}
	\end{align*}
	for all $f\in \cbxa$ and $h,k\in\clh$.
\end{dfn}

It is worth noting that the topology defined above on $I_{\clh}(X,\cla)$ is the weakest topology that renders all maps of the form: $\cli\mapsto \int_X fd\cli_{h,k}$ from $I_{\clh}(X,\cla)$ to $\bbc$, continuous for all $f\in \cbxa$ and $h,k\in\clh$. Consequently, for any fixed $\cli\in I_{\clh}(X,\cla)$, a neighborhood basis at $\cli$ is given by the sets
\begin{align*}
	O=\left\{\clj\in I_{\clh}(X,\cla): \left|\int_X f_id\clj_{h_i,k_i}-\int_X f_id\cli_{h_i,k_i}\right|<\epsilon, 1\leq i\leq n\right\},
\end{align*}
where $f_i\in \cbxa$, $h_i,k_i\in\clh$ for $1\leq i\leq n$, $\epsilon
>0.$ 
The structure of this topology is analogous to the weak topology commonly employed in classical probability theory, thereby generalizing the notion to the setting of operator-valued instruments. Moreover, following the correspondence established in Theorem~\ref{CP and instrument correspondence}, it is evident that the bounded weak topology introduced above is naturally linked to the BW topology on the space of completely positive maps $CP(C(X,\cla),\bh),$ where $X$ is a compact, Hausdorff space. 
For a net $\cli^i$ and $\cli\in \mathcal{R}\text{-}Ins_{\mathcal{H}}(X,\cla)$, recall that the associated completely positive map is defined as \[ \Phi_\cli(f)=\int_Xfd\cli~ \textit{for all}~ f\in C(X,\cla), \] It then follows that the convergence $\cli^i\to\cli$ in $\mathcal{R}\text{-}I_{\mathcal{H}}(X,\cla)$ is equivalent to the convergence $\Phi_{\cli^i}(f)\to\Phi_\cli(f)$ in WOT for all $f\in C(X,\cla).$
The following proposition formalizes this observation and, in essence, states that the spaces $\mathcal{R}-I_\mathcal{H}(X,\cla)$ and $UCP(C(X,\cla),\bh)$ are topologically homeomorphic with respect to their respective bounded weak topologies.

\begin{ppsn}\label{BW topolgy are homeomorphic for regular instruments and UCP maps}
	Let $\cli^i$ be a net in $\mathcal{R}-I_\mathcal{H}(X,\cla)$ and $\cli\in\mathcal{R}-I_\mathcal{H}(X,\cla)$. Then the following are equivalent:
	\begin{enumerate}
		\item $\cli^i\to\cli$ in $\mathcal{R}-I_\mathcal{H}(X,\cla)$ (and, in $I_{\clh}(X,\cla)$) in BW topology.
		\item $\Phi_{\cli^i}\to\Phi_\cli$ in BW topology in $UCP(C(X,\cla),\bh)$.
	\end{enumerate}
\end{ppsn}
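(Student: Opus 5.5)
The plan is to unwind both notions of BW convergence to statements about scalar integrals, and then match them using the defining identity of $\Phi_\cli$ from Theorem~\ref{CP and instrument correspondence}. Here $X$ is compact Hausdorff, as in that theorem. Consequently every $f\in C(X,\cla)$ is bounded and has separable range, since $f(X)$ is compact. Uniform approximation of $f$ by simple functions is available: cover the compact set $f(X)$ by finitely many $\epsilon$-balls and pull back a Borel partition. So $C(X,\cla)\subseteq\cbxa$. Conversely, every element of $\cbxa$ is continuous on $X$, so $\cbxa=C(X,\cla)$ in this setting. The BW topology on $UCP(C(X,\cla),\bh)$ is the one of Bhat--Kumar: $\Phi_i\to\Phi$ if and only if $\Phi_i(f)\to\Phi(f)$ in WOT for every $f\in C(X,\cla)$. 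This uses that on bounded sets of UCP maps, which are norm bounded by $1$, the BW topology coincides with pointwise weak operator convergence.

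\textbf{(1)$\Rightarrow$(2).} Fix $f\in C(X,\cla)$ and $h,k\in\clh$. By the construction in Theorem~\ref{CP and instrument correspondence},
\[
\langle h,\Phi_{\cli^i}(f)k\rangle=\int_X f\,d\cli^i_{h,k},\qquad \langle h,\Phi_{\cli}(f)k\rangle=\int_X f\,d\cli_{h,k}.
\]
Since $f\in\cbxa$, hypothesis (1) gives convergence of the right-hand sides, hence $\Phi_{\cli^i}(f)\to\Phi_\cli(f)$ in WOT. I must also check that $\Phi_{\cli^i}$ is unital when $\cli^i$ is unital: $\Phi_{\cli^i}(1)=\cli^i(X)(1_\cla)=I_\clh$, which follows from the integral of a constant simple function. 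Then the uniform bound $\|\Phi_{\cli^i}\|=1$ together with pointwise WOT convergence gives BW convergence.

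\textbf{(2)$\Rightarrow$(1).} Reverse the chain. BW convergence of the UCP maps implies WOT convergence $\Phi_{\cli^i}(f)\to\Phi_\cli(f)$ for each $f\in C(X,\cla)=\cbxa$, which is exactly convergence of $\int_X f\,d\cli^i_{h,k}$ for every such $f$ and all $h,k$. This is the BW convergence of Definition~\ref{definition of bw topology on instruments}, both in $\mathcal{R}\text{-}I_\clh(X,\cla)$ and as a net in $I_\clh(X,\cla)$, since the defining seminorms are the same.

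\textbf{Main obstacle.} The only genuinely delicate point is the identification $\cbxa=C(X,\cla)$ for compact $X$, together with the fact that the integral $\int_X f\,d\cli_{h,k}$ in the BW definition is the same Bartle integral that defines $\Phi_\cli$. The first inclusion, $C(X,\cla)\subseteq\cbxa$, is what I will verify carefully via the finite $\epsilon$-net argument, since it is what both directions use. The second point holds because both are limits over uniformly convergent simple approximants, using the estimate of Theorem~\ref{separably valued bounded measurable functions integrability}. Everything else is bookkeeping.
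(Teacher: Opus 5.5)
Your proposal is correct and takes the same route as the paper. The paper justifies the proposition only through the observation just before it: since $\langle h,\Phi_\cli(f)k\rangle=\int_X f\,d\cli_{h,k}$, BW convergence of instruments amounts to pointwise WOT convergence of $\Phi_{\cli^i}(f)$ for all $f\in C(X,\cla)$, and for the norm-bounded UCP maps this is BW convergence. Your explicit checks fill in details the paper leaves implicit: that $C_b(X,\cla)=C(X,\cla)$ for compact $X$, via a finite $\epsilon$-net, and that each $\Phi_{\cli^i}$ is unital.
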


 To begin with, we formally define C*-convexity for the collection of all normalized instruments $I_\mathcal{H}(X,\cla).$
	
	\begin{dfn} [$C^*$-convexity]\label{cstarconvexity}
		For any $\cli_i\in I_\clh(X,\cla)$ and $T_i\in\bh$, $1\leq i\leq  n$ with $\sum_{i=1}^n T_i^*T_i=I_\mathcal{H}$, a sum of the form
		\begin{equation}\label{C^*sum}
			\cli(\cdot)=   \sum_{i=1}^nT_i^*\cli_i(\cdot)T_i
		\end{equation}
		is called a \emph{$\cst$-convex combination}\index{$C^*$-convex combination} for $\cli$. The operators $T_i$'s here
		are called {\em $\cst$-coefficients}\index{$C^*$-coefficients}. When $T_i$'s are invertible,
     		the sum in \eqref{C^*sum} is called a
		\emph{proper $\cst$-convex combination}\index{$C^*$-convex combination!proper-} for $\cli$.
	\end{dfn}
	
	Observe that $I_\clh(X,\cla)$ is a {\em $\cst$-convex set}\index{$C^*$-convex set} in the sense that it is closed
	under $\cst$-convex   combinations i.e. $\sum_{i=1}^n{T_i}^*\cli_i(\cdot)T_i\in I_\clh(X,\cla)$, whenever $\cli_i\in I_\clh(X,\cla)$ and $T_i\in\clb(\clh)$ satisfying $\sum_{i=1}^n{T_i}^*T_i=I_\clh$. 
	
	\begin{dfn}[$C^*$-extreme point]\label{cstarextreme}
		An instrument $\cli:\ox\to CP(\cla,\bh)$ is called a \emph{$C^*$-extreme point}\index{$C^*$-extreme point} in
		$I_\clh(X,\cla)$ if, whenever  $\sum_{i=1}^nT_i^*\cli_i(\cdot)T_i$
		is a proper $C^*$-convex combination of $\cli$, then each $\cli_i$ is unitarily equivalent to $\cli$ i.e. there are unitary operators $U_i\in\clb(\clh)$
		such that $\cli_i(\cdot)=U_i^*\cli(\cdot)U_i $ for $1\leq i\leq n.$
	\end{dfn}
	
\subsection{Application 3: Krein-Milman type theorem}\label{Application 3: Krein-Milman type theorem}
The Krein–Milman theorem stands as one of the fundamental results in classical functional analysis. It asserts that, in any locally convex topological vector space, every compact convex subset is the closure of the convex hull of its extreme points. Motivated by this classical result, it is natural to seek an analogue of the Krein–Milman theorem in the framework of $\cst$-convexity, particularly within the space of CP instruments. To this end, we first establish the following proposition, whose proof closely follows the classical line of argument.

\begin{ppsn}\label{prop:normalized instruments with finite support are dense}
Let $X$ be a topological space and let $\clh$ be a Hilbert space. We regard singleton sets as measurable elements of $\ox.$ 
	Then the collection of all normalized instruments on $(X,\ox)$ that are concentrated on finite subsets of $X$ is dense in $I_{\clh}(X,\cla).$
	\end{ppsn}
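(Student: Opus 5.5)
Given $\cli\in I_\clh(X,\cla)$, I will build a net of finitely supported normalized instruments converging to $\cli$ in the BW topology of Definition~\ref{definition of bw topology on instruments}, mimicking the classical proof that finitely supported probability measures are weak$^*$ dense. The index set will consist of pairs $\lambda=(F,\epsilon)$, where $F=\{(f_1,h_1,k_1),\dots,(f_n,h_n,k_n)\}$ is a finite set with $f_i\in\cbxa$ and $h_i,k_i\in\clh$, and $\epsilon>0$. These are ordered by inclusion in the first coordinate and reverse order in the second. For each $\lambda$ I will produce $\cli^\lambda$ lying in the basic neighbourhood $O$ determined by $\lambda$. That suffices, since these sets form a neighbourhood basis at $\cli$.

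\emph{Construction.} By the definition of $\cbxa$, each $f_i$ is a uniform limit of simple functions. I choose simple $g_i$ with $\|f_i-g_i\|_\infty<\delta$, where $\delta$ will be fixed later. I then take a finite measurable partition $\{E_1,\dots,E_m\}$ of $X$ that refines all the partitions underlying $g_1,\dots,g_n$, and I discard any empty $E_l$. On each $E_l$ every $g_i$ is constant. I pick a point $x_l\in E_l$; the singletons $\{x_l\}$ are measurable by hypothesis. Now define
\[
\cli^\lambda(A):=\sum_{l=1}^m \delta_{x_l}(A)\,\cli(E_l),\qquad A\in\ox .
\]
Each $\cli^\lambda(A)$ is a finite sum of CP maps and so is CP. For fixed $a,h,k$, the map $A\mapsto\langle h,\cli^\lambda(A)(a)k\rangle$ is a finite combination of Dirac measures, hence a complex measure. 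Moreover $\cli^\lambda(X)=\sum_l\cli(E_l)=\cli(X)$ is unital. Thus $\cli^\lambda\in I_\clh(X,\cla)$, and it is concentrated on the finite set $\{x_1,\dots,x_m\}$.

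\emph{Estimate.} Since $g_i$ takes the value $g_i(x_l)$ on $E_l$, the integral formula for simple functions gives
\[
\int_X g_i\,d\cli^\lambda_{h,k}=\sum_l \cli_{h,k}(E_l)\bigl(g_i(x_l)\bigr)=\int_X g_i\,d\cli_{h,k}.
\]
So $\cli^\lambda$ and $\cli$ agree exactly on every $g_i$. By Theorem~\ref{separably valued bounded measurable functions integrability}, for any instrument $\clj$ we have $\bigl|\int_X (f_i-g_i)\,d\clj_{h,k}\bigr|\le\|f_i-g_i\|_\infty\,\|\clj_{h,k}\|(X)$. Here the semivariation is bounded by the total variation, which by Remark~\ref{vector measures associated to instruments} is at most $\|h\|\|k\|\|\clj(X,1_\cla)\|=\|h\|\|k\|$ for unital $\clj$. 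Combining these with the triangle inequality yields
\[
\Bigl|\int_X f_i\,d\cli^\lambda_{h_i,k_i}-\int_X f_i\,d\cli_{h_i,k_i}\Bigr|\le 2\delta\|h_i\|\|k_i\|.
\]
Choosing $\delta<\epsilon/(2\max_i\|h_i\|\|k_i\|+1)$ puts $\cli^\lambda$ in $O$, which proves density.

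The main point requiring care is the estimate step. It needs the bound $\|\clj_{h,k}\|(X)\le\|h\|\|k\|$ uniformly over all unital instruments, so that it applies to $\cli^\lambda$ as well as to $\cli$. It also needs the integral to be exactly $\sum_l\cli_{h,k}(E_l)(g(x_l))$ for a simple $g$ adapted to the partition, which follows directly from the definition of the integral of simple functions. I will also check that the BW topology is defined using only $\cbxa$, and not arbitrary bounded continuous functions. This is where the assumption that every element of $\cbxa$ is a uniform limit of simple functions is used. No continuity of the $f_i$ beyond that assumption is needed.
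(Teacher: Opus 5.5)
Your argument is correct and is essentially the paper's proof. You approximate each $f_i$ uniformly by a simple $g_i$, move the mass $\cli(E)$ of each cell $E$ to a chosen point of that cell, and bound the error by $\|f_i-g_i\|_\infty\,\|h_i\|\,\|k_i\|$ for both instruments. Your use of a single common refinement $\{E_1,\dots,E_m\}$ is genuinely needed: the paper sums $\delta_{x_{ij}}(\cdot)\,\cli(A_{ij})$ over a separate partition for each $i$, which gives total mass $n\,\cli(X)$ rather than $\cli(X)$ and does not in general yield $\int_X g_m\,d\clj=\int_X g_m\,d\cli$ once $n\ge 2$, whereas with the common refinement both normalization and exact agreement on every $g_i$ hold, as you verify.
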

    \begin{proof}
	Let $\cli \in I_{\clh}(X,\cla)$, and let $E$ be a typical open neighbourhood of $\cli$ in $I_{\clh}(X,\cla)$ of the form
	\[
	E = \left\{ \clj \in I_{\clh}(X,\cla) : 
	\left|\int_X f_i\, d\clj_{h_i,k_i} - \int_X f_i\, d\cli_{h_i,k_i}\right| < \epsilon, \ 1 \leq i \leq n \right\},
	\]
	for some fixed $f_i \in \cbxa$, $h_i, k_i \in \clh$ $(1 \leq i \leq n)$, and $\epsilon > 0$.  
	We shall construct an element $\clj \in E$ that is concentrated on a finite subset of $X$, which will yield the desired density result.
	
	\smallskip
	For each $i \in \{1, \ldots, n\}$, there exists a simple function $g_i$ on $X$ such that
	\[
	\sup_{x \in X } \|f_i(x) - g_i(x)\| < \frac{\epsilon}{2M},
	\]
	where $M$ is a positive constant satisfying $M > \sup_i \|h_i\|\,\|k_i\|$. Since each $g_i$ is a simple function, there exists a finite partition $\{A_{ij}\}$ of $X$ and corresponding elements $\{a_{ij}\} \subseteq \cla$ (where $j$ ranges over some finite index set $\Lambda_i$, depending on $i$) such that
	\[
	g_i = \sum_{j \in \Lambda_i} a_{ij} 1_{A_{ij}}
	\]
	for each $i = 1, \ldots, n$. Now, for each $i$ and $j$, choose a point $x_{ij} \in A_{ij}$ and define
	\[
	\clj := \sum_{i=1}^n \sum_{j \in \Lambda_i} \delta_{x_{ij}}(\cdot)\, \cli(A_{ij})(\cdot).
	\]
	Clearly, $\clj$ is an instrument concentrated on the finite subset $\{x_{ij}\} \subseteq X$. Moreover,
	\[
	\clj(X)(1_\cla) = \sum_{i=1}^n \sum_{j \in \Lambda_i} \cli(A_{ij})(1_\cla)
	= \cli(X)(1_\cla) = I_\clh,
	\]
	hence $\clj$ is normalized. We now claim that $\clj \in E$.  
	For each $m \in \{1, \ldots, n\}$, we have
	\[
	\int_X g_m\, d\clj
	= \sum_{i=1}^n \sum_{j \in \Lambda_i} \cli(A_{ij})(g_m(x_{ij}))
	= \sum_{j \in \Lambda_m} \cli(A_{mj})(a_{mj})
	= \int_X g_m\, d\cli.
	\]
	Then, for each $i = 1, \ldots, n$, we estimate:
	\[
	\begin{split}
		\left|\int_X f_i\, d\clj_{h_i,k_i} - \int_X f_i\, d\cli_{h_i,k_i}\right|
		&\leq  \left|\int_X f_i d\clj_{h_i,k_i}-\int_X g_i d\clj_{h_i,k_i}\right|
		+\left|\int_X g_i  d\clj_{h_i,k_i}-\int_X g_i d\cli_{h_i,k_i}\right|\\
		&\quad\quad+\left|\int_X g_i d\cli_{h_i,k_i}-\int_X f_i d\cli_{h_i,k_i}\right|\\
		&\leq \left(\sup_{x \in X } \|f_i(x) - g_i(x)\|\right)
		\left(\|\clj_{h_i,k_i}\|(X) + \|\cli_{h_i,k_i}\|(X)\right) \\
		&\leq \frac{\epsilon}{2M} \cdot 2\|h_i\|\,\|k_i\| < \epsilon.
	\end{split}
	\]
	We have used the fact that
	$\|\clj_{h_i,k_i}\|(X), \|\cli_{h_i,k_i}\|(X) \leq \|h_i\|\,\|k_i\|$, which follows directly from the definitions of $\cli_{h_i,k_i},\clj_{h_i,k_i}$.
	Hence $\clj \in E$, completing the proof.
\end{proof}

Below, we present a Krein–Milman type theorem for the space of normalized instruments endowed with the BW topology. It is important to emphasize that, unlike the classical setting, compactness of  $I_\clh(X,\cla)$ is not required for the result.

\begin{thm}\label{Krein-Milman type theorem for instruments}\index{Krein-Milman type Theorem!for instruments}
	Let $X$ be a Hausdorff topological space and let $\cla$ and $\clh$ be separable $\cst$-algebra and separable Hilbert space, respectively. Then the $C^*$-convex hull of $C^*$-extreme points is
	dense in $I_{\clh}(X,\cla)$ with respect to the BW topology.
\end{thm}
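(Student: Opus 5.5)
Building on Proposition~\ref{prop:normalized instruments with finite support are dense}, it suffices to show that every normalized instrument concentrated on a finite subset of $X$ lies in the BW-closure of $C^*$-convex combinations of $C^*$-extreme instruments. Indeed, since finitely supported instruments are BW-dense in $I_{\clh}(X,\cla)$, a closure of a closure argument then finishes the proof. So we fix $\clj=\sum_{l=1}^m \delta_{x_l}(\cdot)\,\phi_l$ with distinct points $x_1,\dots,x_m\in X$, CP maps $\phi_l=\clj(\{x_l\})\in CP(\cla,\bh)$, and $\sum_l\phi_l(1_\cla)=I_\clh$. Because the points are distinct and singletons are measurable, such a $\clj$ is the same datum as a UCP map $\Phi:\cla\to \bh\otimes\mathbb{C}^m\cong \bigoplus_{l=1}^m\bh$, or equivalently (Theorem~\ref{CP and instrument correspondence} with $X$ replaced by the finite discrete space $\{x_1,\dots,x_m\}$) a UCP map on $C(\{x_1,\dots,x_m\},\cla)\cong\cla^{\oplus m}=\cla\otimes\mathbb{C}^m$.

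We then invoke the Krein--Milman type theorem for $C^*$-convexity of UCP maps (Farenick--Morenz for matrix states; in the separable setting, Bhat--Kumar / the BW-density result for $UCP(\clb,\bh)$ with $\clb$ separable). Here $\clb=\cla\otimes\mathbb{C}^m$ is separable because $\cla$ is, and $\clh$ is separable. This yields a net of $C^*$-convex combinations $\sum_i T_i^*\Psi_i(\cdot)T_i$ of $C^*$-extreme points $\Psi_i\in UCP(\clb,\bh)$ converging to $\Phi_\clj$ in the BW topology. Each $\Psi_i$ corresponds to an instrument $\cli_i$ concentrated on $\{x_1,\dots,x_m\}$. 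The correspondence $\cli\leftrightarrow\Phi_\cli$ is affine for $C^*$-convex combinations, since $\Phi_{\sum T_i^*\cli_iT_i}=\sum T_i^*\Phi_{\cli_i}T_i$, and it respects unitary equivalence. Hence the net above translates into a net of $C^*$-convex combinations of instruments. By Proposition~\ref{BW topolgy are homeomorphic for regular instruments and UCP maps}, BW convergence of UCP maps gives BW convergence of the instruments on the finite space. We then transfer this to $X$: for $f\in\cbxa$ we have $\int_X f\,d\cli=\sum_l \cli(\{x_l\})(f(x_l))$ for instruments supported on $\{x_l\}$, so convergence of the finitely many values reduces to convergence of $\Psi(f(x_1),\dots,f(x_m))$ in WOT. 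This is exactly BW convergence on $\clb$.

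The main obstacle is the key step of showing that an instrument $\cli_i$ coming from a $C^*$-extreme UCP map on $\cla\otimes\mathbb{C}^m$ is $C^*$-extreme in the whole of $I_\clh(X,\cla)$, not merely among instruments supported on $\{x_1,\dots,x_m\}$. The plan here is to show that if $\cli_i=\sum_j S_j^*\clk_jS_j$ is a proper $C^*$-convex combination in $I_\clh(X,\cla)$, then each $\clk_j$ is automatically supported on $\{x_1,\dots,x_m\}$. Since the $S_j$ are invertible, $S_j^*\clk_j(X\setminus\{x_l\}_l)(1_\cla)S_j\le\cli_i(X\setminus\{x_l\}_l)(1_\cla)=0$ forces $\clk_j(X\setminus\{x_l\})(1_\cla)=0$, and complete positivity then kills $\clk_j(X\setminus\{x_l\})$. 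The combination therefore lives in the finite-support picture, where $C^*$-extremality of $\Psi_i$ gives $\clk_j=U_j^*\cli_iU_j$. A secondary point to check is the identification of the BW topology on UCP maps used in the cited Krein--Milman theorem with the pointwise-WOT convergence that arises here; on bounded sets these coincide.
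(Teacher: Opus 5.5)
Your proposal is correct and follows the same route as the paper. You reduce to finitely supported instruments via the density proposition, identify such an instrument with a UCP map on $\cla^{\oplus m}$, apply the Bhat--Kumar Krein--Milman theorem for UCP maps, and transfer back through the BW correspondence between regular instruments and UCP maps. Your check that $S_j^*\clk_j(X\setminus F)(1_\cla)S_j\le \cli_i(X\setminus F)(1_\cla)=0$ forces every $\clk_j$ to be supported on $F=\{x_1,\dots,x_m\}$, so that $C^*$-extremality in the finite picture gives $C^*$-extremality in all of $I_{\clh}(X,\cla)$, fills in a step the paper only asserts. One slip: your first identification with a UCP map $\cla\to\bigoplus_l\bh$ is wrong, since that map is CP with $\sum_l\phi_l(1_\cla)=I_\clh$ and is not unital; you never use it, though, and the identification with $UCP(\cla^{\oplus m},\bh)$ that you actually use is correct.
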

\begin{proof}
	Fix an instrument $\cli\in I_{\clh}(X,\cla)$. By Proposition \ref{prop:normalized instruments with
		finite support are dense}, there exists a net $\cli_i\in I_{\clh}(X,\cla)$ such that
	$\cli_i\to\cli$ in $I_{\clh}(X,\cla)$ in BW topology and each $\cli_i$ is concentrated on a finite
	subset of $X$. Hence, it suffices to show that any instrument concentrated on a finite subset belongs to the $\cst$-convex hull. Without loss of generality, assume that $\cli$ is concentrated on a finite subset, say $\{x_1,\ldots,x_n\}$. We may therefore identify $X$ with this finite set. By the correspondence between completely positive maps and instruments (see Theorem~\ref{CP and instrument correspondence}), the instrument $\cli$ corresponds to a unital completely positive (UCP) map $\Phi_\cli:\oplus_{i=1}^n\cla\to\bh.$ According to the Krein–Milman type theorem for UCP maps \cite[Theorem~5.3]{manishjfa}, there exists a net $\Phi_j$ of $C^*$ extreme UCP maps such that $\Phi_j\to\Phi_\cli.$ The corresponding instruments
	$\cli_{\Phi_j}$ are then $\cst$-extreme points of $I_\clh(X,\cla),$ and as discussed in Proposition~\ref{BW topolgy are homeomorphic for regular instruments and UCP maps}, the convergence $\Phi_j\to\Phi_\cli$ implies $\cli_{\Phi_j}\to\cli$ in BW toplogy.
\end{proof}

\section{Tensor product approach}

The vector measure approach developed in the previous section is not the only way to define integration with respect to an instrument. Since bounded measurable functions naturally form a von Neumann algebra, it is equally natural to formulate the theory through completely positive maps on tensor product algebras. In this section we develop such an operator-algebraic approach, which extends the class of admissible integrands beyond the separably valued functions considered earlier.

Throughout this section, let $\clh$ be a separable Hilbert space. For a $\sigma$-finite measure $\nu$ on $(X,\ox)$, define
$
Ins_{\clh}(X,\cla)_\nu
=
\left\{
\cli:\ox\to CP(\cla,\bh):
(\mu_\cli)_\rho\ll\nu,\ \forall\,\rho\in\cls(\clh)
\right\},$
where
\[
(\mu_\cli)_\rho(A)=\operatorname{tr}\!\left(\rho\,\mu_\cli(A)\right),
\qquad A\in\ox.\]
Clearly,
$
Ins_{\clh}(X,\cla)
=
\bigcup_{\nu}
Ins_{\clh}(X,\cla)_\nu,$
where the union is taken over all $\sigma$-finite measures on $(X,\ox)$.

Now let $\cli\in Ins_{\clh}(X,\cla)_\nu$, and let
$(\clk,\pi,E,V)$ be its minimal bi-dilation (Theorem~\ref{Bi-dilation Theorem}), so that
$
\cli(A,a)=V^*\pi(a)E(A)V.$
Since $E$ is a spectral measure, the functional calculus yields a normal $*$-homomorphism
$
\psi_E:L^\infty(X,\nu)\rightarrow\bk,
~
\psi_E(f)=\int_X f\,dE.$
Because $\pi(\cla)\subseteq\psi_E(L^\infty(X,\nu))'$, the maps $\pi$ and $\psi_E$ induce a completely positive map
$
\Phi:
L^\infty(X,\nu)\otimes_{\max}\cla
\longrightarrow
\bk,
\Phi(f\otimes a)=\pi(a)\psi_E(f).
$
Consequently,
$
\Phi_\cli=V^*\Phi V:
L^\infty(X,\nu)\otimes_{\max}\cla
\longrightarrow
\bh$ is a completely positive map naturally associated with the instrument $\cli$.

When $\cla$ is a von Neumann algebra with separable predual and $\cli$ is not only normal, the map $\Phi_\cli$ extends to a normal completely positive map
$
\Phi_\cli:
L^\infty(X,\nu)\,\overline{\otimes}\,\cla
\longrightarrow
\bh.$
Using the canonical identification
$
L^\infty(X,\nu)\,\overline{\otimes}\,\cla
\cong
L^\infty(X,\cla,\nu)
$(see \cite[Theorem~1.22.13]{sakai}), we define
\[
\int_X f\,d\cli:=\Phi_\cli(f),
\qquad
f\in L^\infty(X,\cla,\nu).
\]

In what follows, we present a concrete example of an instrument that naturally arises from a completely positive map. Although this example was not originally framed in the language of instruments, it fits seamlessly within the framework introduced by Farenick, Plosker, Ramsey, and MacLaren \cite{plosker_ramsey_povmintegratiion_2,douglus_plosker_ramsey_povmintegration_1}, who developed a theory of integration with respect to POVM.
\begin{xmpl}\label{non_trivial_instrument}
Let $\mu:\ox\to\bh$ be a POVM, and let $\rho$ be a faithful trace-class operator such that the Radon--Nikodym derivative
$\frac{d\mu}{d\mu_\rho}$ exists. Define
$
\Phi_\mu:
L^\infty(X,\bh,\mu_\rho)\to\bh$
by
\[
\langle h,\Phi_\mu(f)k\rangle
=
\int_X
\operatorname{tr}
\!\left(
|h\rangle\langle k|
\sqrt{\frac{d\mu}{d\mu_\rho}}
\,f\,
\sqrt{\frac{d\mu}{d\mu_\rho}}
\right)
d\mu_\rho .
\]
By \cite[Theorem~2.8]{Plosker_Ramsey}, $\Phi_\mu$ is a normal completely positive map. Hence it induces the normal instrument
\[
\cli_\mu(A,a)=\Phi_\mu(a\,1_A),
\qquad
A\in\ox,\ a\in\bh,
\]
whose POVM marginal is precisely $\mu$.
\end{xmpl}

The tensor product formulation therefore defines the integral simply by
\[
\int_X f\,d\cli_\mu:=\Phi_\mu(f),
\qquad
f\in L^\infty(X,\bh,\mu_\rho).
\]
Unlike the Bartle-type construction developed earlier, this approach places no separability requirement on the essential range of the integrand. Consequently, it naturally accommodates a strictly larger class of operator-valued functions, providing a more flexible framework for integration with respect to normal instruments.

\section*{Acknowledgements}

The author would like to thank Dr.~Sruthymurali for several fruitful discussions during the early stages of this work, particularly those leading to Theorem~\ref{CP and instrument correspondence}. The author is deeply grateful to his supervisor, Prof.~B.~V.~Rajarama Bhat, for his constant encouragement and support, for carefully reading the manuscript, and for many valuable suggestions that significantly improved this work.
The author also gratefully acknowledges the financial support provided through Prof.~Bhat's J.~C.~Bose Fellowship (Fellowship No.~JBR/2021/000024). Finally, the author sincerely thanks the anonymous referee of his Ph.D. thesis for a careful reading of the manuscript, for identifying inaccuracies, and for insightful suggestions that substantially improved both the exposition and the mathematical content of this work.
        
        \bibliography{references}
	\bibliographystyle{plain}

\end{document}